\theoremstyle{definition}
\newtheorem{theorem}{Theorem}[section]
\newtheorem{remark}[theorem]{Remark}
\newtheorem{definition}[theorem]{Definition}
\newtheorem{example}[theorem]{Example}
\newtheorem{proposition}[theorem]{Proposition}
\numberwithin{equation}{section}
\newcommand{\N}{\mathbb{N}}
\newcommand{\Z}{\mathbb{Z}}
\newcommand{\del}{\partial}
\newcommand{\Del}{\Delta}
\newcommand{\unC}{\underline{C}}
\newcommand{\undel}{\underline{\partial}}
\newcommand{\unDel}{\underline{\Delta}}
\newcommand{\Ss}{\mathcal{S}}
\newcommand{\ot}{\otimes}
\newcommand{\bu}{\bullet}
\newcommand{\ds}{\scalebox{0.5}[1]{$\dots$}}
\newcommand{\wt}{\widetilde}
\newcommand{\wh}{\widehat}
\newcommand{\bcr}{\scalebox{1.5}{$\bullet$}}
\newcommand{\bi}{{i}}
\newcommand{\bj}{{j}}
\newcommand{\bo}[1]{{#1}}
\newcommand{\ub}{\underbrace}
\newcommand{\lb}[2]{\text{(\ref{#1}#2)}}
\newcommand{\un}[1]{\underline{#1}}
\newcommand{\rA}[2]{\big({\bo{#1}}\big) \bo{#2}}
\newcommand{\rB}[4]{\big({\bo{#1}}\big| {\bo{#2}}\big| {\bo{#3}}\big) \bo{#4}}
\newcommand{\rC}[6]{\big({\bo{#1}}\big| {\bo{#2}}\big| {\bo{#3}}\big| {\bo{#4}}\big| {\bo{#5}}\big) \bo{#6}}
\newcommand{\rD}[8]{\big({\bo{#1}}\big| {\bo{#2}}\big| {\bo{#3}}\big| {\bo{#4}}\big| {\bo{#5}}\big| {\bo{#6}}\big|{\bo{#7}}\big) \bo{#8}}
\title{An algebraic model for the constant loops map}
\author[L.~Fernandez]{Luis~Fernandez}
  \address{Luis Fernandez,
  Department of Mathematics, Bronx Community College, City University of New York, 2155 University Avenue, Bronx, New York 10453}
  \email{luis.fernandez01@bcc.cuny.edu}
  \author[M.~Rivera]{Manuel~Rivera}
  \address{Manuel Rivera,
  Department of Mathematics, Purdue University, 150 N University St, West Lafayette, Indiana, 47907 }
  \email{manuelr@purdue.edu}
\author[T.~Tradler]{Thomas~Tradler}
  \address{Thomas Tradler,
  Department of Mathematics, New York City College of Technology, City University of New York, 300 Jay Street, Brooklyn, NY 11201}
  \email{ttradler@citytech.cuny.edu}
\keywords{loop space, constant loops, coHochschild homology, string topology}
\subjclass[2020]{55P35, 55U10, 16E40, 55P50, 57P10}
\begin{document}
\maketitle
\begin{abstract}
For any simplicial complex $X$ with a total ordering of its vertices, one can construct a chain complex $\mathbb{L}_\bullet(X)$ generated by necklaces of simplices in $X$, which computes the homology of the free loop space of the geometric realization of $X$. Motivated by string topology, we describe two explicit chain maps $C_\bullet(X) \to \mathbb{L}_\bullet(X)$, where $C_\bullet(X)$ denotes the simplicial chains in $X$, lifting the homology map induced by embedding points in $|X|$ into constant loops in the free loop space of $|X|$. One of the maps has a convenient combinatorial description, while the other is described in terms of higher structure on $C_\bullet(X)$.

\end{abstract}

\section{Introduction}\label{introduction}

Let $X$ be a simplicial complex with a total ordering of its vertices, and denote by $(C_\bullet(X),\partial,\Delta)$ the differential graded (dg) coalgebra of simplicial chains on $X$ over a commutative ring with unit $R$. We consider a chain complex $(\mathbb{L}_\bullet(X), D)$, built directly from the combinatorics of $X$, that computes the homology of $L|X|= \text{Maps}(S^1,|X|)$, the free loop space of the geometric realization of $X$. 
The goal of this article is to give explicit chain level models of the 
embedding $\iota:|X| \hookrightarrow L|X|$ that sends a point $x \in |X|$ to the constant loop at $x$. In fact, we describe \textit{two} chain homotopic natural maps $\rho$ and $\chi$,
\[ \rho, \chi: C_\bullet(X) \to \mathbb{L}_\bullet(X)\]
whose induced maps on homology $H_\bu(|X|)\to H_\bu(L|X|)$ coincide with $H_\bu(\iota)$. 

The chain map $\rho$ is defined by an explicit combinatorial formula making it particularly suitable for computations in string topology, especially in situations where constant loops play a central role.
The chain map $\chi$ is described in terms of the natural higher algebraic structure of $C_\bullet(X)$, giving a conceptual explanation of the algebraic ingredients necessary to construct such a lift.

We envision these constructions being useful in explicit computations of string topology operations such as the \textit{Goresky-Hingston coproduct}. This is a coproduct operation defined on the relative homology $H_\bullet(LM, M)$ of the free loop space of a closed smooth manifold $M$ modulo constant loops, which is sensitive to geometric information beyond the homotopy type of $M$ \cite{N, NRW}. The approach we envision proceeds as follows: 1) begin with a triangulation of $M$ yielding a simplicial complex $X$; 2) lift Poincaré duality to the chain level as an appropriate homotopical structure on $C_\bullet(X)$; 3) define the Goresky--Hingston coproduct as an operation on the cokernel of $\rho$ (or $\chi$). The resulting model is explicit and transparent enough to reveal how geometric features of the manifold $M$ are reflected in the structure of its free loop space. 

 Moreover, the maps $\rho$ and $\chi$ may be used to produce closed elements in $\mathbb{L}_\bullet(X)$, which, when coming from suitable top dimensional cycles in $C_\bu(X)$, can be used to construct explicit Calabi-Yau structures; see Remark \ref{REM:CY} and Example \ref{EXA:2-sphere}.

\subsection{}
To state our results in more detail, we recall the construction of $(\mathbb{L}_\bullet(X), D)$ from \cite{R}, which, informally, is obtained from the coHochschild complex of appropriate interpretation of $C_\bu(X)$ by formally inverting all $1$-simplices. Let $X_j$ be the set of $j$-simplices in $X$ and denote by $\mathsf{s},\mathsf{t} \colon \bigcup_{j=0}^{\infty}X_j \to X_0$ the functions sending a simplex to its first and last vertices, respectively. Consider a new set $X_1^{-1}= \{ \sigma^{-1} : \sigma \in X_1\}$, which will play the role of ``formal inverses" for the elements of $X_1$. Extend $\mathsf{s}$ and $\mathsf{t}$ to $X_1^{-1}$ by $\mathsf{s}(\sigma^{-1}) = \mathsf{t}(\sigma)$ and $\mathsf{t}(\sigma^{-1})=\mathsf{s}(\sigma)$. Given a commutative ring with unit $R$, define $\mathbb{L}_n(X)$ to be the free $R$-module generated by all ordered sequences $(\sigma_1 | \cdots | \sigma_p )\sigma_{p+1}$ such that:
\begin{enumerate}
\item $p$ is a non-negative integer, $\sigma_1, \ldots, \sigma_p \in \Big(\bigcup_{j=1}^{\infty} X_j\Big) \cup X_1^{-1}$, and $\sigma_{p+1} \in \bigcup_{j=0}^{\infty} X_j$,
\item $|\sigma_1| + \cdots + |\sigma_p|-p+ |\sigma_{p+1}| =n$, where $|\sigma|$ denotes the dimension of a simplex $\sigma$,
\item $\mathsf{t}(\sigma_i)=\mathsf{s}(\sigma_{i+1})$ for $i=1,\ldots,p$ and $\mathsf{t}(\sigma_{p+1})=\mathsf{s}(\sigma_1)$, and
\item $(\sigma_1 | \cdots | \sigma_p ) \sigma_{p+1}$ is reduced, i.e., a $1$-simplex never appears next to its formal inverse. 
\end{enumerate}
A sequence $(\sigma_1 | \cdots | \sigma_p )\sigma_{p+1}$ satisfying (1)-(4) is called a \textit{necklace of dimension $n$} in $X$ with \textit{beads} $\sigma_1, \ldots, \sigma_{p}$ and \textit{marked bead} $\sigma_{p+1}$. Necklaces with $p=0$ have a marked bead $\sigma$ and no other beads; in this case, $\sigma$ must be a vertex in $X_0$ and we write the necklace as $(\text{id}_{\sigma})\sigma \in \mathbb{L}_0(X)$.  

The differential $D \colon \mathbb{L}_n(X) \to \mathbb{L}_{n-1}(X)$ is defined by sending a given generator $(\sigma_1|\cdots| \sigma_p) \sigma_{p+1} \in \mathbb{L}_n(X)$ to the appropriately signed sum of all the generators $(\alpha_1 | \cdots |\alpha_q) \alpha_{q+1} \in \mathbb{L}_{n-1}(X)$ such that each of $\alpha_1, \ldots, \alpha_q$ is a simplex in the simplicial complex $\sigma_1 \cup \cdots \cup \sigma_{p+1}$ and $\alpha_{q+1}$ is a sub-simplex of $\sigma_{p+1}$. Explicitly, the differential $D$ is essentially given by applying the (reduced) boundary and coproduct of $C_\bu(X)$ to each bead as well as the marked bead (similarly to the differential in the coHochschild complex); see \eqref{EQU:(L,D)=(PxC,d+} and \eqref{necklaceD} below for more details. 

We recall from \cite[Theorem 20]{R}  that for any ordered simplicial complex $X$, the chain complex $(\mathbb{L}_\bullet(X),D)$ is quasi-isomorphic to $(S_\bullet(L|X|), \partial)$ the singular chains on the free loop space of the geometric realization of $X$.

\subsection{}
Our first map $\rho: C_\bullet(X) \to \mathbb{L}_\bullet(X)$ describing a model of the inclusion of constant loops is given by an explicit formula. To state it, we use the notation that for any $\sigma \in X_k$ with vertices $\bi_0< \dots <\bi_k$ we write $\sigma= [\bi_0,\dots,\bi_k]\in C_k(X)$. The formal inverse of a $1$-simplex $[a,b]$ will be denoted by $[b,a]$. Then, we define $\rho:C_\bullet(X) \to \mathbb{L}_\bullet(X)$ by setting $\rho([\bi_0])=(\text{id}_{[\bi_0]})[\bi_0]$, and, for $\sigma= [\bi_0,\dots,\bi_k]$ with $k>0$, define $\rho([\bi_0,\dots,\bi_k]) \in \mathbb{L}_k(X)$ to be
\begin{align}\label{EQU:rho} 
& \rho([\bi_0,\dots,\bi_k])=\sum_{\ell\geq 1}\, \,\sum_{s=((p_1,q_1),\dots,(p_\ell,q_\ell))\in \Ss^{(\ell)}_k} \,\,(-1)^{\epsilon(s)} \cdot \sigma\llbracket s\rrbracket, \quad \text{where}
\\ \label{EQU:sigma[[s]]} & \sigma\llbracket s\rrbracket=
\Big( [\bi_{q_1},\bi_{p_1}] \Big| [\bi_{p_1},\dots,\bi_{p_2},\bi_{q_1},\dots,\bi_{q_2}] \Big| 
\\
\nonumber
& \hspace{13.8mm}[\bi_{q_2},\bi_{p_2}] \Big| [\bi_{p_2},\dots,\bi_{p_3},\bi_{q_2},\dots,\bi_{q_3}] \Big| \quad \dots \quad \Big| 
\\
\nonumber
& \hspace{13.8mm} 
 [\bi_{q_{\ell-1}},\bi_{p_{\ell-1}}] \Big| [\bi_{p_{\ell-1}},\dots,\bi_{p_\ell},\bi_{q_{\ell-1}},\dots,\bi_{q_\ell}]  \Big| 
 [\bi_{q_\ell},\bi_{p_\ell}] \Big)  [\bi_{p_\ell},\dots,\bi_{q_1}]
\end{align}
Here, for $k\geq 0$ and $\ell\geq 1$, the indexing set $\Ss^{({\ell})}_k$ is defined to be the set of all sequences of tuples $(p_j,q_j)\in \N_0\times\N_0$ of length $\ell$ subject to the following conditions:
\begin{align}\label{EQU:S^l_k}
\Ss^{({\ell})}_k=\{s=((p_1,q_1),\dots,(p_\ell,q_\ell))  :& \quad  
  p_1+q_1<p_2+q_2<\dots<p_\ell+q_\ell, \\ \nonumber
 &\hspace{-20mm} 0=p_1\leq p_2\leq \dots\leq p_\ell\leq q_1\leq q_2\leq \dots \leq q_\ell=k, \\ \nonumber
 & \hspace{-6mm}\text{and }\quad p_2<q_1,\,\,  p_3<q_2\,\, ,\dots, \,\, p_\ell<q_{\ell-1} \quad  \}
\end{align}
Note that the conditions for $s=((p_1,q_1),\dots,(p_\ell,q_\ell))\in \Ss^{(\ell)}_k$ in \eqref{EQU:S^l_k} also imply that $ p_1<q_1$, $p_2<q_2$, $\dots$, $p_\ell<q_\ell$. The sign $\epsilon(s)$ in \eqref{EQU:rho} for $s=((p_1,q_1),\dots,(p_\ell,q_\ell))\in \Ss^{(\ell)}_k$ is given by $\epsilon(s)=\ell+1 +p_\ell+q_1\cdot (q_\ell+1)+\sum_{j=2}^{\ell} p_j\cdot (q_j-q_{j-1})$.
\begin{theorem}\label{Theorem1}
The map $\rho \colon (C_\bullet(X),\partial) \to (\mathbb{L}_\bullet(X),D)$ defined above is a chain map. On homology, the map coincides with the map induced by the constant loops embedding $\iota:|X| \hookrightarrow L|X|$.
\end{theorem}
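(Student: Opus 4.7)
The plan is to prove the two claims separately: (1) $\rho$ is a chain map, and (2) its induced map on homology coincides with $H_\bu(\iota)$.

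For (1), I would verify the identity $D\rho=\rho\partial$ by a direct combinatorial expansion. The differential $D$ on a necklace decomposes as a sum, over beads (including the marked bead), of a simplicial boundary term that shortens a bead by deleting a vertex and a coproduct term that splits a bead into two adjacent beads. Applying $D$ to $\sigma\llbracket s\rrbracket$ for a given $s=((p_1,q_1),\dots,(p_\ell,q_\ell))\in\Ss^{(\ell)}_k$ produces necklaces that I would classify into three families:
(a) necklaces arising when the boundary removes a vertex $i_j$ with $j\notin\{p_1,q_1,\dots,p_\ell,q_\ell\}$ from a middle bead or the marked bead, which precisely match the necklaces appearing in $\rho$ applied to the face $[i_0,\dots,\wh{i_j},\dots,i_k]$;
(b) necklaces arising when the coproduct splits a middle bead at a new position $(p',q')$, or when the boundary removes a vertex indexed by some $p_j$ or $q_j$, which pair up with analogous terms coming from $\sigma\llbracket s'\rrbracket$ for an $s'$ related to $s$ by a local modification of its tuple list, and cancel in $\pm$-pairs;
(c) necklaces in which the coproduct introduces a $1$-simplex adjacent to its formal inverse, which vanish by the reduction condition (4) in the definition of $\mathbb{L}_\bu(X)$.
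The sign $\epsilon(s)=\ell+1+p_\ell+q_1(q_\ell+1)+\sum_{j=2}^{\ell}p_j(q_j-q_{j-1})$ is engineered precisely so that the matchings in (a) and the cancellations in (b) respect signs. The hard part will be organizing the case analysis and executing the sign bookkeeping, especially the pair-cancellations of type (b), which require an explicit involution on the ``unwanted'' terms spanning different indexing sets.

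For (2), I would invoke the method of acyclic models. First, I verify that $\rho$ is natural under monomorphisms $f\colon X'\hookrightarrow X$ of ordered simplicial complexes: since the formula for $\rho$ depends only on the relative ordering of vertices in each simplex, and $f$ preserves this ordering while sending $1$-simplices to $1$-simplices (hence formal inverses to formal inverses), we have $f_\ast\circ\rho_{X'}=\rho_X\circ f_\ast$. The functor $X\mapsto C_\bu(X)$ is free on the collection of standard ordered simplices $\{\Delta^k\}_{k\geq 0}$, as each $n$-simplex of $X$ arises from a unique monomorphism $\Delta^n\hookrightarrow X$. The functor $X\mapsto\mathbb{L}_\bu(X)$ is acyclic on these models in positive degrees, since $|\Delta^k|$ is contractible, hence so is $L|\Delta^k|$ (which deformation retracts onto its constant-loops subspace $|\Delta^k|$), giving $H_i(\mathbb{L}_\bu(\Delta^k))=H_i(L|\Delta^k|)=0$ for $i>0$ by \cite[Theorem 20]{R}. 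On vertices, $\rho([i_0])=(\mathrm{id}_{[i_0]})[i_0]$ represents the class of the constant loop at $[i_0]$, matching the value of any chain-level model of $\iota_\ast$. By the method of acyclic models, any two natural chain maps $C_\bu(X)\to\mathbb{L}_\bu(X)$ that agree in degree zero are chain homotopic, so $\rho$ is chain homotopic to a chain representative of $\iota_\ast$, and therefore $H_\bu(\rho)=H_\bu(\iota)$.
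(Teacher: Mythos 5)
Your overall strategy mirrors the paper's: a direct combinatorial verification that $D\rho=\rho\partial$, followed by an acyclic-models argument for the identification with $H_\bullet(\iota)$. The second half (naturality under ordered inclusions, freeness of $C_\bullet$ on the standard simplices, acyclicity of $\mathbb{L}_\bullet$ on them via contractibility of $L|\Delta^k|$, agreement in degree $0$) is essentially the paper's argument and is fine.

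The gap is in your classification (a)--(c) for the chain-map identity. Your family (c) asserts that necklaces in which a $1$-simplex sits next to its formal inverse ``vanish by the reduction condition.'' That is not what the reduction relation does: in $\mathbb{P}_\bullet(X)$, and hence in $\mathbb{L}_\bullet(X)$, a sequence containing an adjacent pair $\sigma|\sigma^{-1}$ is \emph{identified with} the shorter sequence obtained by deleting that pair; it is not set to zero. These collapsed terms are indispensable. Already for $\sigma=[0,1]$ one has $\rho([0,1])=([1,0])[0,1]$, and the two terms of $D\big(([1,0])[0,1]\big)$ come from $\delta$ producing $([1,0]|[0,1])[1]$ and $([0,1]|[1,0])[0]$, which reduce to $(\mathrm{id}_{[1]})[1]-(\mathrm{id}_{[0]})[0]=\rho(\partial[0,1])$; if those terms vanished, $\rho$ would fail to be a chain map already in degree $1$. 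Relatedly, your family (a) only accounts for $\rho$ of the \emph{interior} faces $[i_0,\dots,\wh{i_j},\dots,i_k]$ with $0<j<k$ (since $0=p_1$ and $k=q_\ell$ always lie in the index set), while (b) and (c) are claimed to cancel or vanish outright; so your bookkeeping never produces the outer faces $\rho(\partial_0\sigma)$ and $\pm\rho(\partial_k\sigma)$ that sit on the other side of the equation. In the paper these outer faces arise precisely from the reduction collapse: removing the interior vertex of a bead of the form $[\bi_{p_r},\bi_{p_{r+1}},\bi_{q_r}]$ (with $p_r+1=p_{r+1}$, $q_r=q_{r+1}$) leaves an edge flanked by its formal inverses, the flanking pairs cancel, and when this happens in the first slot (where $p_1=0$) the vertex $\bi_0$ disappears from the necklace, yielding exactly the terms of $\rho([\bi_1,\dots,\bi_k])$; dually at the last slot for $\bi_k$. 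You need to rework (b)/(c) to track these collapsed necklaces rather than discard them, and to separate out which of them survive as the outer faces instead of cancelling.
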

Explicit examples for formula \eqref{EQU:rho} are given in Example \ref{EXA:0123-simplicies}. The combinatorics involved in the indexing set may be better understood through a picture describing a recipe of how to produce a typical summand \eqref{EQU:rho}; see Remark \ref{RMK:rho}.

\subsection{}
Before we can define our second map $\chi$ modeling the inclusion of constant loops, we first need to recall how $\mathbb{L}_\bu(X)$ can be written as an appropriate product of a chain complex $\mathbb{P}_\bullet(X)$ with $C_\bu(X)$, which we describe now.

For the $R$-coalgebra $C_0=(C_0(X), \Delta)$ obtained by equipping the $R$-module $C_0(X)$ of linear combinations of vertices in $X$ with the coproduct determined by $\Delta(a)=a \otimes a$ on any $a \in X_0$, we consider the monoidal category of dg $C_0$-bicomodules with underlying free $R$-modules and cotensor product $\underset{C_0}{\square}$. Note, that if we modify the differential $\partial\colon C_\bullet(X) \to C_{\bullet-1}(X)$, $ {\partial}([\bi_0,\dots, \bi_k])=\sum_{j=0}^{k} (-1)^j\cdot [\bi_0, \dots, \widehat{\bi_j}, \dots, \bi_k]$, to a reduced differential $\widetilde{\partial} \colon C_\bullet(X) \to C_{\bullet-1}(X)$,
\begin{equation} \label{modifiedpartial}
\widetilde{\partial}([\bi_0,\dots, \bi_k])= \sum_{j=1}^{k-1} (-1)^j[\bi_0, \dots, \widehat{\bi_j}, \dots, \bi_k],
\end{equation}
then $(C_\bullet(X), \widetilde{\partial}, \Delta)$ together with graded $C_0$-bicomodule structure given by  the maps $\mathsf{s}$ and $\mathsf{t}$ defines a comonoid in the monoidal category of dg $C_0$-bicomodules.

Next, define $\mathbb{P}_\bullet(X)$ to be the graded $R$-module generated by the symbols $(\text{id}_{\sigma})$, for all $\sigma \in X_0$, and all reduced sequences $(\sigma_1 | \dots | \sigma_p)$ with $\sigma_1, \ldots, \sigma_p \in \Big(\bigcup_{j=1}^{\infty} X_j\Big) \cup X_1^{-1}$,  $\mathsf{t}(\sigma_i)=\mathsf{s}(\sigma_{i+1})$ for $i=1,\ldots,p-1$, and $|\sigma_1| + \cdots + |\sigma_p|-p=n$. Then, $\mathbb{P}_\bullet(X)$ becomes a graded $C_0$-bicomodule with structure maps induced by $(\sigma_1 | \dots | \sigma_p) \mapsto \mathsf{s}(\sigma_1)$ and $(\sigma_1 | \dots | \sigma_p) \mapsto \mathsf{t}(\sigma_p)$. Moreover, we equip $\mathbb{P}_\bullet(X)$ with a differential $d \colon \mathbb{P}_n(X) \to \mathbb{P}_{n-1}(X)$ that sends a generator $(\sigma_1 | \dots | \sigma_p) \in \mathbb{P}_n(X)$ to the appropriately signed sum of all generators $(\alpha_1 | \cdots | \alpha_q) \in \mathbb{P}_{n-1}(X)$ such that $\mathsf{s}(\alpha_1)=\mathsf{s}(\sigma_1)$, $\mathsf{t}(\alpha_q)=\mathsf{t}(\sigma_p)$, and each of $\alpha_1, \ldots, \alpha_q$ is a simplex of the simplicial complex $\sigma_1 \cup \cdots \cup \sigma_p$. Together with the map $\mu \colon \mathbb{P}_\bullet(X) \underset{C_0}{\square} \mathbb{P}_\bullet(X) \to \mathbb{P}_\bullet(X)$,  induced by the concatenation of sequences, $\mathbb{P}_\bullet(X)$ becomes a monoid in the category of dg $C_0$-bicomodules; this is a many-object version of Adams' cobar construction that models the dg category of paths in $|X|$ \cite{A,R,R2}.

Using the above, we see that there is a natural isomorphism of chain complexes
\begin{equation}\label{EQU:(L,D)=(PxC,d+}
\big(\mathbb{L}_\bullet(X),D\big) \cong \big(\mathbb{P}_\bullet(X) \underset{C_0 \otimes C_0^{\text{op}}}{\square} C_\bullet(X), d \otimes \text{id} + \text{id} \otimes \widetilde{\partial} + \delta \big),
\end{equation}
where $\delta$ is defined in terms of $\Delta$ together with the left- and right- $C_0$-module structures of $\mathbb{P}_\bullet(X)$; see \eqref{EQU:delta-in-P} for more details.

We then consider the following additional structure:
\begin{itemize}
\item a degree $0$ coproduct \[\nabla_0 \colon \mathbb{P}_\bullet(X) \to \mathbb{P}_\bullet(X) \otimes \mathbb{P}_\bullet(X)\] extending $\nabla_0(\sigma) = \big(\sigma\big) \otimes \big(\sigma\big)$ for any $\sigma \in X_1$ and making $(\mathbb{P}_\bullet(X),d)$ a dg coassociative counital coalgebra (Section \ref{SEC:nabla_0}), and 
\item a chain homotopy \[\nabla_1 \colon \mathbb{P_\bullet }(X)\to \mathbb{P}_\bullet(X)\otimes \mathbb{P}_\bullet (X)\]  between $\nabla_0$ and $\nabla_0^{\text{op}}$ (Section \ref{SEC:nabla_1}). 
\end{itemize}
The coproducts $\nabla_0$ and $\nabla_1$ are appropriately compatible with the monoid structure of $\mathbb{P}_\bullet (X)$. Furthermore, the structure $(\mathbb{P}_\bullet(X), d, \mu, \nabla_0)$ has the following property: there is a chain map
\[S \colon \mathbb{P}_\bullet(X) \to \mathbb{P}_\bullet(X)\] extending $S([a,b])= ([b,a])$ and satisfying certain ``antipode equations" (Section \ref{SEC:S}). This structure may be understood as a generalization of a ``homotopy Hopf algebra" \cite[1.8]{B}. 

\subsection{}
Our second map $\chi: C_\bullet(X) \to \mathbb{L}_\bullet(X)$ is expressed in terms of $\Delta$, $\nabla_0$, $\nabla_1$, and $S$, as we now define. For any $\sigma \in X_k$, for simplicity write   
 \begin{eqnarray} \label{notationDelta}
     \Delta(\sigma)=\sigma' \otimes \sigma'' \in C_\bullet(X) \otimes C_\bullet(X)
\end{eqnarray}
and 
\begin{eqnarray} \label{notationnabla}
    \nabla_\varepsilon (\sigma)  = (\sigma)^{\varepsilon,1} \otimes (\sigma)^{\varepsilon,2} \in \mathbb{P}_\bullet(X) \otimes \mathbb{P}_\bullet(X)
 \end{eqnarray}
for $\varepsilon=0,1$. The coproduct $\nabla_0$ has the special property that when applied to $(\sigma) \in \mathbb{P}_{k-1}(X)$ we obtain a sum of tensors in which the second factor is always a sequence of simplices of length $1$. This means we can write each factor $(\sigma)^{0,2} \in \mathbb{P}_\bullet(X)$ as $(\sigma^{0,2})$ for some simplex $\sigma^{0,2}$ in $X$. Define a map $ \chi \colon C_\bullet(X) \to \mathbb{L}_\bullet(X)$ on any $\sigma \in X_0$ by $\chi (\sigma) = \big(\text{id}_{\sigma}\big) \sigma$, and, for $\sigma \in X_k$ where $k>0$, define $\chi(\sigma)$ by
\begin{multline}\label{EQU:chi}
\chi (\sigma) =(-1)^{|(\sigma)^{0,1}|} \big( S(( \sigma)^{0,1}) \big)   \sigma^{0,2} + 
\big( (\sigma)^{{1,1}} | S( (\sigma ) ^{1,2}) \big)  \mathsf{s}(\sigma) 
\\
+ (-1)^{ |\sigma'||\sigma''|+|(\sigma')^{0,1}|} \big(( \sigma'')^{1,1} | S(( \sigma'')^{1,2}) | S(( \sigma' )^{0,1})\big)  \sigma'^{\text{  }0,2}
\end{multline}
\begin{theorem}\label{Theorem2}
The map $\chi \colon (C_\bullet(X),\partial) \to (\mathbb{L}_\bullet(X),D)$ defined above is a chain map. On homology, the map coincides with the map induced by the constant loops embedding $\iota:|X| \hookrightarrow L|X|$.
\end{theorem}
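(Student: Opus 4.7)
The plan is to establish two independent facts: the chain-map equation $D\chi = \chi \partial$, and the identification of the induced map on homology with $H_\bu(\iota)$.

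For the chain-map property, I would use the splitting of $D$ from \eqref{EQU:(L,D)=(PxC,d+} as $D = d \otimes \text{id} + \text{id} \otimes \widetilde{\partial} + \delta$, and apply $D$ term-by-term to each of the three summands of $\chi(\sigma)$ in \eqref{EQU:chi}. The key algebraic identities to deploy are: (i) $d$ commuting with $\nabla_0$, since $(\mathbb{P}_\bu(X),d,\nabla_0)$ is a dg coalgebra (Section \ref{SEC:nabla_0}); (ii) the chain-homotopy identity $d\nabla_1 + \nabla_1 d = \nabla_0^{\text{op}} - \nabla_0$ (Section \ref{SEC:nabla_1}); (iii) the antipode equations for $S$ with respect to $\mu$ and $\nabla_0$ (Section \ref{SEC:S}); and (iv) the coassociativity of $\Delta$ together with its compatibility with $\widetilde{\partial}$ in $C_\bu(X)$. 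On the other side, $\chi(\partial\sigma)$ is expanded by applying $\chi$ to each face in $\partial\sigma$ and further decomposing by $\Delta$, $\nabla_0$, $\nabla_1$, and $S$. I would match the two expressions by grouping contributions according to where the differential lands inside the necklace: on the leftmost bead $S((\sigma)^{0,1})$, on the middle beads involving $\nabla_1$, or on the marked bead $\sigma^{0,2}$ (respectively $\sigma'^{\, 0,2}$). The antipode relations produce pairwise cancellations between the contributions of the first two summands of $\chi$, while the $\delta$-contribution, which contracts the marked bead into its neighbors via $\Delta$, matches exactly the defect $\nabla_0^{\text{op}} - \nabla_0$ controlled by $\nabla_1$ in the third summand.

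For the homology identification, the cleanest route is to compare with the map $\rho$ of Theorem \ref{Theorem1}. Both $\chi$ and $\rho$ agree on vertices — on a vertex $\sigma \in X_0$ each sends $\sigma$ to $(\text{id}_\sigma)\sigma$ — and both are natural with respect to order-preserving simplicial maps of ordered simplicial complexes. A standard acyclic-models argument on the standard simplices $\Del^k$ then produces a natural chain homotopy $H \colon C_\bu(X) \to \mathbb{L}_{\bu+1}(X)$ with $DH + HD = \chi - \rho$: since $|\Del^k|$ and hence $L|\Del^k|$ are contractible, $\mathbb{L}_\bu(\Del^k)$ has homology concentrated in degree zero, and any two natural chain maps $C_\bu \to \mathbb{L}_\bu$ agreeing in degree zero are inductively chain-homotopic. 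Combined with $H_\bu(\rho) = H_\bu(\iota)$ from Theorem \ref{Theorem1}, this yields the claim.

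The main obstacle is the bookkeeping in the chain-map verification. Each of the three summands of $\chi(\sigma)$ produces multiple contributions under $D$, and the cancellations must simultaneously respect the antipode relations for $S$, the non-cocommutativity defect measured by $\nabla_1$, and the explicit signs $(-1)^{|(\sigma)^{0,1}|}$ and $(-1)^{|\sigma'||\sigma''|+|(\sigma')^{0,1}|}$. I expect the cleanest presentation groups terms by the position in the necklace at which the differential acts, and displays the central step — that the $\delta$-part of $D\chi(\sigma)$ matches the ``Alexander--Whitney'' half of $\chi(\widetilde{\partial}\sigma)$ — as a single computation relying on the defining relation of $\nabla_1$. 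The need to maintain reducedness of necklaces (condition (4) in the definition of $\mathbb{L}_n(X)$) will also require some care when $1$-simplices produced by $S$ land next to their originals.
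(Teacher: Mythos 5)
Your overall strategy is sound and your homology argument is essentially the paper's: the paper likewise observes that $\rho$ and $\chi$ agree in low degrees (in fact in degrees $0$ and $1$) and invokes acyclic models on the standard simplices to conclude that they are naturally chain homotopic, whence $H_\bu(\chi)=H_\bu(\rho)=H_\bu(\iota)$ by Theorem \ref{Theorem1}. Where you diverge is in the verification that $\chi$ is a chain map. You propose a single monolithic expansion of $D\chi(\sigma)$ against $\chi(\partial\sigma)$, grouped by the position in the necklace where the differential acts. The paper instead factors $\chi=\psi\circ T\circ\iota$ through two auxiliary complexes $\mathbb{L}^{\text{ad}}_\bullet(X)$ and $\mathbb{L}^{\text{ad-op}}_\bullet(X)$, whose differentials incorporate the adjoint action of $\mathbb{P}_\bullet(X)$ on itself via $\nabla_0$ and $\nabla_0^{\text{op}}$ respectively, and proves each factor is a chain map separately (Propositions \ref{propiota}, \ref{propT}, \ref{proppsi}). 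This factorization cleanly quarantines each piece of structure: the antipode equations are used only for $\iota$, the homotopy $\nabla_1$ and its derivation property \eqref{nabla_1derivation} only for $T$, and the compatibility of $\nabla_0$ with $\widetilde{\Delta}$ only for $\psi$. Your direct computation would need all of these identities simultaneously, together with the sign bookkeeping you flag, and while I believe it can be pushed through (it is essentially the composite of the paper's three computations expanded out), as written it remains a plan rather than a verification: the claimed ``pairwise cancellations between the contributions of the first two summands'' and the matching of the $\delta$-part against $\nabla_0^{\text{op}}-\nabla_0$ are precisely the nontrivial content, and carrying them out in one pass is where the real work lies. If you pursue your route, I would recommend first isolating the identity $\mu\circ(\text{id}\otimes S)\circ\nabla_0=\varepsilon\cdot\text{id}$ applied to $\sigma''$ as a lemma, since that is what collapses the $\delta$-contributions; this is in effect rediscovering the paper's intermediate complex $\mathbb{L}^{\text{ad}}_\bullet(X)$.
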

We note that the map $\chi$ uses more than just the chain level coassociative coalgebra structure of $C_\bu(X)$: it depends on the $E_3$-structure of $C_\bu(X)$; in particular, $\nabla_0$ uses the $E_2$ structure and $\nabla_1$ the $E_3$-structure of $C_\bu(X)$. This can be seen, for example, when studying the string topology of a manifold, where a correct model for the string topology BV-algebra (over $\Z$ or a finite field) in terms of Hochschild homology may require the use of this higher structure. An instance of this phenomenon for the $2$-sphere was first noted by Menichi \cite{M}; see Example \ref{EXA:2-sphere} as well as \cite{PT} and \cite{GNC} for $\Z_2$-coefficients. 

Finally, we note that using standard arguments from acyclic models we also have the following theorem.
\begin{theorem}\label{Theorem-chain-homotopic}
The maps $\rho$ and $\chi$ are chain homotopic.
\end{theorem}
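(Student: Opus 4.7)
The proof of Theorem~\ref{Theorem-chain-homotopic} proceeds by the method of acyclic models. Take as base category the category of ordered simplicial complexes with order-preserving simplicial inclusions, and take as models the standard ordered simplices $\{\Delta^n\}_{n \geq 0}$. Both $\rho$ and $\chi$ are manifestly natural with respect to such morphisms: the formula \eqref{EQU:rho} for $\rho$ refers only to the vertex-ordered combinatorics of the simplices involved, and the structure maps $\Delta$, $\nabla_0$, $\nabla_1$, $S$ entering the formula \eqref{EQU:chi} for $\chi$ are each defined combinatorially on ordered simplicial complexes. Furthermore, $C_\bu(-)$ is free on models in the standard sense: for each $n$, $C_n(X) = \bigoplus_{\sigma \in X_n} R \cdot \sigma$, and each basis generator $\sigma$ is the image of the top-dimensional simplex $\iota_n \in C_n(\Delta^n)$ under the chain map induced by the inclusion $\sigma \colon \Delta^n \hookrightarrow X$.

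The key input from \cite{R} is that $\mathbb{L}_\bu(-)$ is acyclic on models in positive degrees: by Theorem~20 of \cite{R}, $\mathbb{L}_\bu(\Delta^n)$ is quasi-isomorphic to $S_\bu(L|\Delta^n|)$, and the free loop space of the contractible polytope $|\Delta^n|$ is itself contractible, so $H_i(\mathbb{L}_\bu(\Delta^n)) = 0$ for $i \geq 1$ and $H_0(\mathbb{L}_\bu(\Delta^n)) = R$. Moreover, inspection of the two defining formulas shows $\rho([i_0]) = (\text{id}_{[i_0]})[i_0] = \chi([i_0])$ on every vertex $[i_0] \in X_0$, so $\rho$ and $\chi$ already coincide as chain maps in degree zero.

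With these pieces in place, one builds a natural chain homotopy $h \colon C_\bu(-) \to \mathbb{L}_{\bu+1}(-)$ by induction on degree, setting $h_0 = 0$. Suppose $h_j \colon C_j(-) \to \mathbb{L}_{j+1}(-)$ is a natural transformation satisfying $D h_j + h_{j-1}\partial = \rho - \chi$ for all $j < n$. For the top simplex $\iota_n \in C_n(\Delta^n)$ of the model, set
\[
\xi_n := \rho(\iota_n) - \chi(\iota_n) - h_{n-1}(\partial \iota_n) \in \mathbb{L}_n(\Delta^n).
\]
A routine computation using that $\rho$ and $\chi$ are chain maps (Theorems~\ref{Theorem1} and~\ref{Theorem2}), the inductive hypothesis, and $\partial^2 = 0$ shows $D\xi_n = 0$. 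Since $n \geq 1$, the acyclicity above yields some $\eta_n \in \mathbb{L}_{n+1}(\Delta^n)$ with $D\eta_n = \xi_n$; we set $h_n(\iota_n) := \eta_n$ and extend to $C_n(X)$ for arbitrary $X$ by naturality via the basis description. The main obstacle is precisely the model-acyclicity of $\mathbb{L}_\bu(-)$, which we do not re-prove here but import as Theorem~20 of \cite{R} combined with the elementary contractibility of $L|\Delta^n|$; everything else is the standard acyclic-models bookkeeping.
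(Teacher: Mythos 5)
Your proof is correct and takes essentially the same approach as the paper: the paper also disposes of Theorem \ref{Theorem-chain-homotopic} by the acyclic models argument already used for Theorem \ref{Theorem1}, after observing that $\rho$ and $\chi$ agree in low degrees. You have simply made explicit the standard bookkeeping (naturality, freeness of $C_\bullet$ on the models $\Delta^n$, and acyclicity of $\mathbb{L}_\bullet(\Delta^n)$ via \cite[Theorem 20]{R} together with the contractibility of $L|\Delta^n|$) that the paper leaves implicit.
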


\section{The chain complex $(\mathbb{L}_\bullet(X), D)$} 

We give a detailed definition of the chain complex $(\mathbb{L}_\bullet(X),D)$ that was described above. This complex was defined in \cite{R} as an example of a more general construction, but below we give a direct and self-contained description.

Fix a commutative ring with unit $R$ and write $\otimes= \otimes_R$. For any graded set $S$ denote by $R\langle S \rangle$ the free graded $R$-module generated by $S$. Recall the underlying graded $R$-module of the chains in $X$ is $C_\bullet(X)= R\langle \bigcup_{j=0}^{\infty} X_j \rangle$. The first and last vertex maps $\mathsf{s}, \mathsf{t} \colon \bigcup_{j=0}^\infty X_j \to X_0$ induce $R$-linear maps
\[C_\bullet(X) \to C_0(X) \otimes C_\bullet(X)\]
and
\[ C_\bullet(X) \to C_\bullet(X) \otimes C_0(X) \]
endowing $C_\bullet(X)$ with the structure of a graded bicomodule over the coalgebra $(C_0(X)=R\langle X_0\rangle, \Delta)$, where $\Delta(a)=a\otimes a$ for $a\in X_0$. When equipped with the differential $\widetilde{\partial} \colon C_\bullet(X)\to C_{\bullet-1}(X)$ defined in \eqref{modifiedpartial}, $C_\bullet(X)$ becomes a dg $C_0(X)$-bicomodule. The Alexander-Whitney coproduct, considered as the map 
\[\Delta \colon C_\bullet(X) \to C_\bullet(X) \underset{C_0}{\square} C_\bullet(X)\]
\[\Delta([\bi_0,\dots, \bi_k])= \sum_{j=0}^k [\bi_0, \dots, \bi_j] \underset{C_0}{\square} [\bi_j, \dots, \bi_k], \]
 where $\underset{C_0}{\square}$ denotes the cotensor product over the coalgebra $C_0=(C_0(X), \Delta)$, is a map of dg $C_0(X)$-bicomodules. From now on, write $\square=\underset{C_0}{\square}$. Thus, the triple $(C_\bullet(X), \widetilde{\partial}, \Delta)$ defines a comonoid in the monoidal category of dg $C_0(X)$-bicomodules whose underlying $R$-module is free and monoidal structure given by $\square$.

\begin{definition}
For any graded $R$-module $V$ we let $\underline{V}$ be $V$ shifted down by $1$, i.e., $\underline{V}_j:=V_{j+1}$. Denote $C= C_{\bullet>0}(X) \oplus R \langle X_1^{-1} \rangle$, so that
$\unC$ is a non-negatively graded $C_0$-bicomodule. Consider the graded $R$-module \[\mathbb{T}^{C_0}_\bullet(C)= \bigoplus_{p=0}^{\infty} \underline{C}^{{\square } p}.\] Note $\mathbb{T}^{C_0}_\bullet(C)$ is generated by all sequences $\underline{\sigma_1} \square \cdots \square \underline{\sigma_p}$ where $\sigma_i \in \bigcup_{j=1}^\infty X_j \cup X_1^{-1}$. When $p=0$ the direct sum above is interpreted as a single copy of $C_0(X)$ and we denote the image of $a \in X_0$ in $\mathbb{T}^{C_0}_\bullet(C)$ by $\text{id}_a$. Furthermore, $\mathbb{T}^{C_0}_\bullet(C)$ inherits the structure of a graded $C_0(X)$-bicomodule. We quotient $\mathbb{T}^{C_0}_\bullet(C)$ by the relation generated by 
\[ \underline{\sigma_1} \square \cdots \square \underline{\sigma_i} \square \underline{\sigma_{i+1}} \square  \cdots \square \underline{\sigma_p} \sim \underline{\sigma_1} \square \cdots \square \underline{\sigma_{i-1}} \square \underline{\sigma_{i+2}} \square \cdots \square \underline{\sigma_p} \]
if either $\sigma_i \in X_1$ and $\sigma_{i+1}=\sigma_i^{-1}$ or $\sigma_{i+1}\in X_1$ and $\sigma_i = \sigma_{i+1}^{-1}$. Denote $\mathbb{P}_\bullet(X)= \mathbb{T}^{C_0}_\bullet(C)/\sim$ and the equivalence class of $\underline{\sigma_1} \square \cdots \square \underline{\sigma_p}$ by $\big(\sigma_1 | \cdots |\sigma_p \big)$. The graded $C_0(X)$-bicomodule structure passes to $\mathbb{P}_\bullet(X)$ and concatenation of sequences induces a map \[ \mu \colon \mathbb{P}_\bullet(X) \square \mathbb{P}_\bullet(X) \to \mathbb{P}_\bullet(X)\] making $\mathbb{P}_\bullet(X)$ into a monoid in the category of $C_0(X)$-bicomodules. 
\end{definition}
\begin{remark}
The monoid $(\mathbb{P}_\bullet(X),d, \mu)$ may  be understood as the localization of the cobar construction of the comonoid $(C_\bullet(X),\widetilde{\partial}, \Delta)$ along the multiplicative set generated by $X_1$. 
\end{remark}
\begin{definition} Define a graded $R$-module
\begin{equation}\label{necklacecomplex}
\mathbb{L}_\bullet(X)=\mathbb{P}_\bullet(X) \underset{C_0 \otimes C_0^{\text{op}}}{\square} C_{\bullet}(X).
\end{equation}
This is an algebraic description of the graded $R$-module generated by necklaces in $X$, see Section \ref{introduction}. Define a linear map $D \colon \mathbb{L}_\bullet(X) \to \mathbb{L}_{\bullet-1}(X)$ by
\begin{equation} \label{necklaceD}
D \,\,\,= \,\,\, d \underset{C_0 \otimes C_0^{op}}{\square} \text{id}_{C_\bullet(X)} 
 \,\,\,+ \,\,\, \text{id}_{\mathbb{P}_\bullet(X)} \underset{C_0 \otimes C_0^{op}}{\square} \widetilde{\partial}
  \,\,\, + \,\,\, \delta,
\end{equation} 
where $\wt\partial$ is the reduced differential \eqref{modifiedpartial}, and the maps $d$ and $\delta$ are described below.

Recall our notation of $C_\bullet(X)= R\langle \bigcup_{j=0}^{\infty} X_j \rangle$ and $C= C_{\bullet>0}(X) \oplus R \langle X_1^{-1} \rangle$ from above. In order to state the definitions of $d$ and $\delta$, we first need to define the reduced coproduct $\widetilde{\Delta} \colon C \to C \square C$, which is given by setting $\widetilde{\Delta}(\sigma)=0$ if $\sigma \in X_1 \cup X_1^{-1}$, and
\[\widetilde{\Delta}([\bi_0, \ldots, \bi_k]) = \sum_{j=1}^{k-1} =[\bi_0, \ldots, \bi_j] \square  [\bi_j, \ldots, \bi_k]
\]
for any $[\bi_0, \ldots, \bi_k]\in X_k$ with $k>1$. Denote by $s:C\to \unC$ the shift map $s(c)=\un{c}$, which is a map of degree $-1$. By abuse of notation, we also denote by $s \colon C_\bullet(X) \to \unC$ the map that is zero on $C_0(X)$ and the shift map on $C_{\bullet >0}(X)$. Then, we define:
\begin{align}
&\widetilde{\undel}:\unC\to\unC, &&\scalebox{1}{$\widetilde{\undel}(\un{c})=-s\circ \widetilde{\del} \circ s^{-1}(\un{c})=-\un{\widetilde{\del}(c)}$} \\
&\widetilde{\unDel}:\unC\to\unC \square \unC, &&  \widetilde{\unDel}(\un{c})= (s\ot s)\circ \widetilde{\Del} \circ s^{-1}(\un{c})={\sum_{(c)}}(-1)^{|\un{c'}|+1}\cdot\un{c'}\square \un{c''}
\end{align}
The map $d \colon \mathbb{P}_\bullet(X) \to \mathbb{P}_{\bullet-1}(X)$ that appears in the formula \eqref{necklaceD} is then defined by extending $\underline{\widetilde{\partial}}+ \underline{\widetilde{\Delta}}$ as a derivation of $\mu$. In fact, this makes $(\mathbb{P}_\bullet(X), d, \mu)$ a monoid in the category of dg $C_0$-bicomodules, which can be reformulated as a dg category structure on the object set $X_0$.

Finally to state the definition of $\delta$, we define the maps
\begin{align}
&\unDel_L:C_{\bullet}(X)\to\unC \square C_{\bullet}(X), &&  \hspace{-2mm}\unDel_L(c)=(s\ot 1)\circ \Del(c)={\sum_{(c)}}\,\,\,  \un{c'} \square c''\\
&\unDel_R:C_{\bullet}(X)\to C_{\bullet}(X)\square \unC, && \hspace{-2mm}
 \unDel_R(c)=-(1\ot s)\circ \Del(c)={\sum_{(c)}} (-1)^{|c'|+1}\cdot c' \square\un{c''}
\end{align}
and with this, $\delta$ is defined by setting
\begin{equation}\label{EQU:delta-in-P} 
 \delta \,\,\,=\,\,\, (\mu \underset{C_0 \otimes C_0^{op}}{\square} \text{id}_{C_\bullet(X)} ) \circ ( \text{id}_{\mathbb{P}_\bullet(X)} \underset{C_0 \otimes C_0^{op}}{\square} \underline{\Delta}_L)\,\,\,  +\,\,\,  \tau \circ (\text{id}_{\mathbb{P}_\bullet(X)} \underset{C_0 \otimes C_0^{op}}{\square}\underline{\Delta}_R),
\end{equation}
where 
\begin{multline*}
\tau \colon \big( \mathbb{P}_\bullet(X)  \underset{C_0 \otimes C_0^{op}}{\square}C_\bullet(X)\big) \underset{C_0 \otimes C_0^{op}}{\square} \unC \to \unC \square \big(\mathbb{P}_\bullet(X)  \underset{C_0 \otimes C_0^{op}}{\square} C_\bullet(X) \big)\\
\to \mathbb{P}_\bullet(X) \underset{C_0 \otimes C_0^{op}}{\square} C_\bullet(X)
\end{multline*}
is the switch map followed by the concatenation map $\unC \square \mathbb{P}_\bullet(X) \to \mathbb{P}_\bullet(X)$. We leave it as an exercise to check that $D^2=0$ and that the above definition $(\mathbb{L}_\bullet(X), D)$ agrees with the description given in the Section \ref{introduction}. The topological meaning of this construction is given by the following result.
\end{definition}

\begin{theorem}\label{necklacesandloopspace}\cite[Theorem 20]{R} For any ordered simplicial complex $X$, the chain complex $(\mathbb{L}_\bullet(X),D)$ is quasi-isomorphic to $(S_\bullet(L|X|), \partial)$ the singular chains on the free loop space of the geometric realization of $X$. 
\end{theorem}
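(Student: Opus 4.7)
Since the statement is cited from \cite[Theorem 20]{R}, I only outline the strategy I would follow to establish it directly from the definitions above. The idea is to realize $(\mathbb{L}_\bullet(X), D)$ as a derived trace of the monoid $(\mathbb{P}_\bullet(X), d, \mu)$ in dg $C_0$-bicomodules, and then to identify each piece with its topological counterpart.

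The first step is a many-object, localized version of Adams' cobar theorem: the monoid $(\mathbb{P}_\bullet(X), d, \mu)$, regarded as a dg category with object set $X_0$, is quasi-isomorphic to a Moore-style dg category of singular chains on paths in $|X|$ between vertices. The ordinary cobar construction of the comonoid $(C_\bullet(X), \wt\partial, \Delta)$ already realizes this when $|X|$ is simply connected, but in general a formal inversion of $1$-simplices is needed to undo a $\pi_1$-completion phenomenon. I would implement this either by comparing both sides to a model of Kan's loop groupoid, or by invoking the Rivera--Zeinalian cobar theorem for non-simply connected spaces. The essential ingredient is the quotient relation $\sigma | \sigma^{-1} \sim \mathrm{id}$ built into the definition of $\mathbb{P}_\bullet(X)$, without which no such comparison map is a quasi-isomorphism once $\pi_1(|X|)$ is nontrivial.

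The second step is to express $L|X|$ as the homotopy pullback of the two-sided endpoint evaluation $P|X| \to |X| \times |X|$ along the diagonal $|X| \to |X| \times |X|$. Algebraically, this homotopy pullback is modeled, on sufficiently free dg $C_0$-bicomodules, by the strict cotensor product $\mathbb{P}_\bullet(X) \underset{C_0 \otimes C_0^{\text{op}}}{\square} C_\bullet(X)$ of \eqref{necklacecomplex}. The three summands of $D$ in \eqref{necklaceD} correspond respectively to the internal differential on paths, the simplicial differential $\wt\partial$ on chains in $|X|$, and the gluing term $\delta$ arising from the comultiplication $\Delta$ of $C_\bullet(X)$ fed into the concatenation $\mu$ of $\mathbb{P}_\bullet(X)$ from both sides. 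Composing the equivalences of the two steps with the usual singular model of the homotopy pullback yields the asserted quasi-isomorphism with $S_\bullet(L|X|)$.

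The main obstacle is the first step: establishing the localized cobar quasi-isomorphism in the non-simply connected setting, including verifying that the reduced necklace quotient does not destroy homological information. The second step is then essentially formal, because $\mathbb{P}_\bullet(X)$ is a cofibrant enough $(C_0 \otimes C_0^{\text{op}})$-comodule for the strict cotensor product to compute the derived cotensor product, ensuring that the algebraic pullback computes the topological one on the nose.
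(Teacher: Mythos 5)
The paper does not prove this statement; it is imported verbatim from \cite[Theorem 20]{R}, so there is no internal proof to measure your sketch against. Judged against what the paper itself says about the provenance of the result (Remark \ref{REM:CY} and Remark \ref{relationtocoCH}), your outline is consistent with the actual argument in \cite{R} in its first step but diverges in the second. The first step is indeed the crux: $(\mathbb{P}_\bullet(X),d,\mu)$ is the localization of the cobar construction of $(C_\bullet(X),\wt\partial,\Delta)$ at the $1$-simplices, and the generalized (non-simply-connected) cobar theorem of \cite{A,R,R2} identifies it with the dg category of Moore paths in $|X|$; your emphasis on the necessity of the reduced quotient $\sigma|\sigma^{-1}\sim\mathrm{id}$ for nontrivial $\pi_1$ is exactly right. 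For the second step, however, \cite{R} does not argue via a homotopy pullback of the evaluation $P|X|\to|X|\times|X|$ along the diagonal; it instead constructs an explicit natural chain homotopy equivalence between $\mathbb{L}_\bullet(X)$ and $CH_\bullet(\mathbb{P}_\bullet(X),\mathbb{P}_\bullet(X))$, the Hochschild complex of the path dg category, and then invokes the standard identification of Hochschild homology of chains on the based path/loop category with $H_\bullet(L|X|)$ (a derived-trace statement, as you say). The two routes are morally equivalent, but the Hochschild comparison is where the combinatorial content lives.

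One soft spot in your step two: you assert that cofibrancy of $\mathbb{P}_\bullet(X)$ as a $(C_0\otimes C_0^{\mathrm{op}})$-comodule makes the strict cotensor product compute the derived one ``on the nose.'' This does not by itself address the relevant issue, because $\mathbb{L}_\bullet(X)$ is not the cotensor product with the naive differential $d\,\square\,\mathrm{id}+\mathrm{id}\,\square\,\wt\partial$: the differential \eqref{necklaceD} carries the twisting term $\delta$ of \eqref{EQU:delta-in-P}, built from $\Delta$ and the two-sided action of $\unC$ on $\mathbb{P}_\bullet(X)$. So the object is a twisted (coHochschild-type) cotensor product, and the statement that this particular twisted complex models the homotopy pullback is precisely what the explicit equivalence with $CH_\bullet(\mathbb{P}_\bullet(X),\mathbb{P}_\bullet(X))$ in \cite{R} establishes. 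Your sketch would need either to reproduce that comparison or to give an independent argument that the $\delta$-twisted cotensor product computes the homotopy fiber product; as written, that gap is papered over by the word ``formal.''
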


\begin{remark} \label{relationtocoCH}
    We now relate $\mathbb{L}_\bullet(X)$ to the ordinary coHochschild and Hochschild complexes. We refer to \cite[Section 1]{HPS} for an exposition on the (co)Hochschild complex.  Suppose $X$ is a simply connected ordered simplicial complex.  Then there is a semi-simplicial set $Y$ with one $0$-simplex and no $1$-simplices together with a homotopy equivalence $\pi \colon X \xrightarrow{\simeq} Y$  of semi-simplicial sets. Let $C_\bullet(Y)$ be the dg coalgebra of  simplicial chains and denote by $coCH_\bullet(C_\bullet( Y), C_\bullet( Y))$ the (ordinary) coHochschild complex.  In particular, note that $C_0(Y)=R$ and $C_1(Y)=0$. Define a chain map 
\[ \theta_{\pi} \colon \mathbb{L}_\bullet(X) \to coCH_\bullet(C_\bullet(Y), C_\bullet(Y))\] by setting
    \begin{equation} \label{tocoHochschild}
 \theta_{\pi} (\big(\sigma_1 | \cdots |\sigma_p\big)\sigma_{p+1})= \big(\overline{\theta} (\sigma_1) | \cdots | \overline{\theta} (\sigma_p)\big)\pi(\sigma_{p+1}).
    \end{equation}
    where $\overline{\theta}(\sigma) = \pi(\sigma)$ if $\sigma \in X_k$ for $k>1$ and $\overline{\theta}(\sigma)=1$ if $\sigma \in X_1 \cup X_1^{-1}$. The map $\theta_{\pi}$ is a quasi-isomorphism (assuming that $X$ is simply connected). Furthermore, if $Y$ is a finite simplicial complex, then $coCH_\bullet(C_\bullet(Y), C_\bullet( Y))$ is exactly the linear dual of the Hochschild complex of the dg algebra of simplicial cochains $C^\bullet( Y)$, which, by \cite{J,U}, is quasi-isomorphic to the singular cochains on the free loop space of $|Y|$. 

\end{remark}

\begin{remark}\label{REM:CY}
    The data $(\mathbb{P}_\bullet(X), d, \mu)$ may be regarded as a dg category on the object set $X_0$. A generalization of a classical theorem of Adams tells us that this dg category is quasi-equivalent to the dg category of paths in $|X|$ \cite{A,R}.  Furthermore, an explicit natural chain homotopy equivalence between $\mathbb{L}_\bullet(X)$ and $CH_\bullet(\mathbb{P}_\bullet(X),\mathbb{P}_\bullet(X))$, the Hochschild chain complex of the dg category $\mathbb{P}_\bullet(X)$, is constructed in \cite{R}. This may be understood as an instance of dg Koszul duality. 
    
    When $X$ is a finite simplicial complex, the dg category $\mathbb{P}_\bullet(X)$ is smooth, i.e., perfect as a dg bimodule over itself. A weak smooth $n$-Calabi-Yau structure in $\mathbb{P}_\bullet(X)$ is a cycle in $CH_n(\mathbb{P}_\bullet(X),\mathbb{P}_\bullet(X))$ that satisfies an appropriate nondegeneracy condition (see \cite[Definition 12]{G2}). Hence, a  weak smooth $n$-Calabi-Yau structure on $\mathbb{P}_\bullet(X)$ can be specified by an $n$-cycle in the smaller complex $\mathbb{L}_\bullet(X)$.  
    
    Finiteness also implies that the dg algebra of cochains $C^\bullet(X)$ is proper, i.e., perfect as a dg $R$-module.  A weak proper (also called compact) $n$-Calabi-Yau structure on $C^\bullet(X)$ is a cycle in $CH^n(C^\bullet(X), C_\bullet(X))$, the Hochschild cochain complex of the dg algebra $C^\bullet(X)$ with coefficients in the $C^\bullet(X)$-bimodule $C_\bullet(X)$, satisfying an appropriate nondegeneracy condition (see \cite[Remark 49]{G2}). When $X$ is simply connected, $\mathbb{L}_\bullet(X)$ is quasi-isomorphic to $CH^\bullet(C^\bullet(X), C_\bullet(X))$, so, in this case, a weak proper $n$-Calabi-Yau structure on $C^\bullet(X)$ can also be specified by an $n$-cycle in $\mathbb{L}_\bullet(X)$. For more on Calabi-Yau structures, see \cite[Section 2.3]{KTV}.

\end{remark}

\section{A model for the constant loops map}

We now look at the map  $\rho:C_\bullet(X) \to \mathbb{L}_\bullet(X)$ defined in \eqref{EQU:rho}-\eqref{EQU:sigma[[s]]} in more detail by applying $\rho$ to low dimensional simplicies and we apply it to the $2$-sphere. We end with a proof of Theorem \ref{Theorem1}.

\subsection{Computing $\rho$ in examples}

Suppose $\sigma=[\bi_0,\dots, \bi_k]$ is a $k$-simplex in $X$. The next example computes $\rho(\sigma)$ in dimensions $k=0,1,2,3$.
\begin{example}\label{EXA:0123-simplicies}
We abbreviate a simplex $[\bi_0,\dots,\bi_k]$ by writing $\bi_0\dots\bi_k$.  Then, the formulas for the standard $0$-, $1$-, $2$-,  and $3$-simplicies are (with $\bi_0=\bo{0}$, $\bi_1=\bo{1}$, etc.):
\begin{align}
\rho(\bo{0})=&\bo{0}\\
\rho(\bo{0\bo{1}})=&\rA{10}{01}\\
\rho(\bo{0}\bo{1}\bo{2})=&\rA{20}{012}+ \rB{10}{012}{20}{01} +\rB{20}{012}{21}{12}\\
\nonumber & +\rC{10}{012}{20}{012}{21}{1}
\end{align}
The analogous formula for $\rho(\bo{012})$ in the coHochschild complex was also computed in \cite{PT} for $\Z_2$ coefficients.
\begin{align}
\rho(\bo{0123})=&\hspace{4mm} \rA{30}{0123} \\ \nonumber
& -\rB{20}{023}{30}{012}\\ \nonumber
&+\rB{30}{013}{31}{123}\\ \nonumber
& -\rB{10}{0123}{30}{01}\\ \nonumber
&-\rB{20}{0123}{31}{12}\\ \nonumber
& -\rB{30}{0123}{32}{23}\\ \nonumber
& +\rC{10}{012}{20}{023}{30}{01}\\ \nonumber
&+\rC{20}{012}{21}{123}{31}{12}\\ \nonumber
& -\rC{20}{023}{30}{013}{31}{12}\\ \nonumber
&+\rC{30}{013}{31}{123}{32}{23}\\ \nonumber
& +\rC{10}{012}{20}{0123}{31}{1}\\ \nonumber
&-\rC{10}{0123}{30}{013}{31}{1}\\ \nonumber
& +\rC{20}{023}{30}{0123}{32}{2}\\ \nonumber
& -\rC{20}{0123}{31}{123}{32}{2}\\ \nonumber
& +\rD{10}{012}{20}{023}{30}{013}{31}{1}\\ \nonumber
& -\rD{10}{012}{20}{012}{21}{123}{31}{1}\\ \nonumber
& -\rD{20}{023}{30}{013}{31}{123}{32}{2}\\ \nonumber
& +\rD{20}{012}{21}{123}{31}{123}{32}{2}
\end{align}
The above terms are all coming from the indexing set $\Ss^{({\ell})}_3$ described in \eqref{EQU:S^l_k}. For example, the term $\rD{10}{012}{20}{012}{21}{123}{31}{1}$ comes from the indexing sequence \[s=((0,1),(0,2),(1,2),(1,3))\in \Ss^{({4})}_3,\] which is easily seen to satisfy the conditions in \eqref{EQU:S^l_k}. One can visualize this sequence $s$ as a sequence of brackets starting at $(0,q_1)=(0,1)$, ending at $(p_\ell,k)=(1,3)$, which keep moving to the right, while overlapping at consecutive stages (since \eqref{EQU:S^l_k} requires $p_{j+1}<q_j$).
\[
\begin{tikzpicture}[xscale=0.5,yscale=0.5]
\node [left] at (-1.5,1) {visualization of $((0,1),(0,2),(1,2),(1,3)):$};
\node [right] at (0,2) {$0$}; \node [right] at (1,2) {$1$};
\node [right] at (2,2) {$2$}; \node [right] at (3,2) {$3$};
\draw (0.5,1.5)--(0.5,1.3)--(1.5,1.3)--(1.5,1.5);
\draw (0.5,1.1)--(0.5,0.9)--(2.5,0.9)--(2.5,1.1);
\draw (1.5,0.7)--(1.5,0.5)--(2.5,0.5)--(2.5,0.7);
\draw (1.5,0.3)--(1.5,0.1)--(3.5,0.1)--(3.5,0.3);
\end{tikzpicture}
\]
\end{example}

\begin{remark}\label{RMK:rho}
To illustrate the combinatorics of the $\rho$ map, we give an example of how to choose the terms in the output $\rho([\bi_0,\dots,\bi_{14}])$ of a $14$-dimensional simplex with vertices $\bi_0,\dots,\bi_{14}$. In fact, by \eqref{EQU:rho}, one needs to sum over all sequence $s=((p_1,q_1),\dots,(p_\ell,q_\ell))\in \Ss^{(\ell)}_{14}$ for all $\ell$. For example, when $\ell=5$, we need all $s=((p_1,q_1),(p_2,q_2),(p_3,q_3),(p_4,q_4),(p_5,q_5))\in \Ss^{(5)}_{14}$. Note from \eqref{EQU:S^l_k}, that to get a sequence $s$, we need to choose numbers in non-decreasing order:
\[
0=p_1\leq p_2\leq p_3\leq p_4\leq p_5\leq q_1\leq q_2\leq q_3\leq q_4\leq q_5=14
\]
\begin{itemize}
\item
Now, if we start, for example, by choosing $p_5$ and $q_1$, this will determine the special tensor on the right; say, if we pick $p_5=6$ and $q_1=9$, then we get a term
\[
\big(\quad\dots\quad\big) [\bi_6,\bi_7,\bi_8,\bi_9]
\]
(Note that we do allow the case where $p_\ell=q_1$.)
\item
Next, if we pick, for example, $p_2=4$ and $q_2=11$, the first two tensor factors in $\mathbb{L}_\bu(X)$ become (since $p_1=0$ and $q_1=9$, the sequence $\bi_{p_1},\dots, \bi_{p_2}$ is $\bi_0,\bi_1,\bi_2,\bi_3,\bi_4$, and $\bi_{q_1},\dots, \bi_{q_2}$ is $\bi_9,\bi_{10},\bi_{11}$):
\[
\big([\bi_9,\bi_0]\big| [\bi_0,\bi_1,\bi_2,\bi_3,\bi_4,\bi_9,\bi_{10},\bi_{11}] \big|\quad\dots\quad\big) [\bi_6,\bi_7,\bi_8,\bi_9]
\]
\item
Next, say, we pick, $p_3=5$ and $q_3=13$, we get the next two tensor factors:
\[
\big(
[\bi_9,\bi_0]\big| [\bi_0,\bi_1,\bi_2,\bi_3,\bi_4,\bi_9,\bi_{10},\bi_{11}] \big|
[\bi_{11},\bi_4]\big| [\bi_4,\bi_5,\bi_{11},\bi_{12},\bi_{13}] \big|
\quad\dots\quad\big) [\bi_6,\bi_7,\bi_8,\bi_9]
\]
\item
Continuing in this way we may pick, for example, the sequence $(p_1,q_1)=(0,9)$, $(p_2,q_2)=(4,11)$, $(p_3,q_3)=(5,13)$, $(p_4,q_4)=(5,14)$ and $(p_5,q_5)=(6,14)$. This gives:
\begin{multline*}
\big(
[\bi_9,\bi_0]\big| [\bi_0,\bi_1,\bi_2,\bi_3,\bi_4,\bi_9,\bi_{10},\bi_{11}] 
\big|[\bi_{11},\bi_4]\big| [\bi_4,\bi_5,\bi_{11},\bi_{12},\bi_{13}] 
\\
\big|[\bi_{13},\bi_5]\big| [\bi_5,\bi_{13},\bi_{14}] 
\big|[\bi_{14},\bi_5]\big| [\bi_5,\bi_6,\bi_{14}] \big|
[\bi_{14},\bi_6]
\big) [\bi_6,\bi_7,\bi_8,\bi_9]
\end{multline*}
\[
\begin{tikzpicture}[xscale=0.5,yscale=0.5]
\node [right] at (0,2) {$\bi_0$}; \node [right] at (1,2) {$\bi_1$}; \node [right] at (2,2) {$\bi_2$};
\node [right] at (3,2) {$\bi_3$}; \node [right] at (4,2) {$\bi_4$}; \node [right] at (5,2) {$\bi_5$};
\node [right] at (6,2) {$\bi_6$}; \node [right] at (7,2) {$\bi_7$}; \node [right] at (8,2) {$\bi_8$};
\node [right] at (9,2) {$\bi_9$}; \node [right] at (10,2) {$\bi_{10}$}; \node [right] at (11,2) {$\bi_{11}$};
\node [right] at (12,2) {$\bi_{12}$}; \node [right] at (13,2) {$\bi_{13}$}; \node [right] at (14,2) {$\bi_{14}$};
\draw (0.5,1.5)--(0.5,1.3)--(9.5,1.3)--(9.5,1.5);
\draw (4.5,1.1)--(4.5,0.9)--(11.5,0.9)--(11.5,1.1);
\draw (5.5,0.7)--(5.5,0.5)--(13.5,0.5)--(13.5,0.7);
\draw (5.5,0.3)--(5.5,0.1)--(14.5,0.1)--(14.5,0.3);
\draw (6.5,-0.1)--(6.5,-0.3)--(14.5,-0.3)--(14.5,-0.1);
\draw (0.5,2.5)--(0.5,3); \draw (4.5,2.5)--(4.5,3); 
\draw (5.5,2.5)--(5.5,3); \draw (6.5,2.5)--(6.5,3); 
\draw (9.5,2.5)--(9.5,3); \draw (11.5,2.5)--(11.5,3); 
\draw (13.5,2.5)--(13.5,3); \draw (14.5,2.5)--(14.5,3); 
\node at (0.5,3.5) {$\bi_{p_1}$}; \node at (9.5,3.5) {$\bi_{q_1}$};
\node at (4.5,3.5) {$\bi_{p_2}$}; \node at (11.5,3.5) {$\bi_{q_2}$};
\node at (5.4,4.1) {$\bi_{p_3}$}; \node at (13.5,3.5) {$\bi_{q_3}$};
\node at (5.6,3.4) {$\bi_{p_4}$}; \node at (14.4,4.1) {$\bi_{q_4}$};
\node at (6.5,3.5) {$\bi_{p_5}$}; \node at (14.6,3.4) {$\bi_{q_5}$};
\end{tikzpicture}
\]
\end{itemize}
The conditions $p_{j+1}<q_{j}$ guarantee that the simplex $[\bi_{p_j},\dots,\bi_{p_{j+1}},\bi_{q_j},\dots,\bi_{q_{j+1}}]$ is non-degenerate, (i.e, is a simplex of the correct geometric dimension $(p_{j+1}-p_j)+(q_{j+1}-q_j)+1$, which by $p_j+q_j<p_{j+1}+q_{j+1}$ is greater than $1$). This disallows, for example, sequences such as $((0,5),(5,14))$. Moreover, we have that $p_j<q_j$, which follows from the conditions $p_{j+1}<q_{j}$ and $0=p_1\leq p_2\leq \dots\leq p_\ell\leq q_1\leq q_2\leq \dots \leq q_\ell=k$; this guarantees that the simplex $[\bi_{q_j},\bi_{p_j}]$ is a (non-degenerate) $1$-simplex.
\end{remark}

\begin{example}\label{EXA:2-sphere}
We now apply the map $\rho$ to the $2$-sphere $S^2$ considered as the boundary of the $3$-simplex $[\bo{0},\bo{1},\bo{2},\bo{3}]$ with $R=\mathbb{Z}$. This extends the example of the $2$-sphere given in \cite{PT} for $\Z_2$. Since the chain $c=[\bo{0},\bo{1},\bo{2}]-[\bo{0},\bo{1},\bo{3}]+[\bo{0},\bo{2},\bo{3}]-[\bo{1},\bo{2},\bo{3}]\in C_2(S^2)$ is closed, we get a closed element $\rho(c)\in \mathbb{L}_2(X)$. Consider the semi-simplicial set $S^2/\mathcal{T}$ obtained by collapsing the set of $2$-simplices $\mathcal{T}=\{[0,1,3], [0,2,3], [1,2,3]\}$ to a point and denote by $\pi \colon S^2\to S^2/\mathcal{T}$ the quotient map, which is a homotopy equivalence. The dg coalgebra of simplicial chains $C_\bullet(S^2/\mathcal{T})$ is isomorphic to the homology of the $2$-sphere $H=H_\bu(S^2)=\Z.\sigma\oplus \Z.\varepsilon$ with $|\varepsilon|=0$ and $|\sigma|=2$, trivial differential, and coproduct determined by $\Delta(\varepsilon)=\varepsilon \otimes \varepsilon$, $\Delta(\sigma)=\varepsilon \otimes \sigma + \sigma \otimes \varepsilon$.
Using the notation of Remark \ref{relationtocoCH} and the computation for the $2$-simplex from Example \ref{EXA:0123-simplicies}, we obtain
\[
\theta_{\pi} (\rho(c))= \big(\text{id}\big)\pi[012] + \big(\pi[012]| \pi[012]\big)\pi[0]= \big(\text{id}\big)\sigma+ \big(\sigma | \sigma\big)\varepsilon.
\]
We note that since $H$ is graded co-commutative, $ \big(\text{id}\big)\sigma\in coCH_2(H,H)$ by itself is also closed. In fact, we also have a chain map $H\to coCH_\bu(H,H), h\mapsto h$, and so we get a map $H=H_\bullet(S^2) \to H_\bu(coCH_\bu(H,H))\cong H_\bu(L|S^2|)$. McGowan, Naef, and O'Callaghan showed in \cite{GNC} this map does \emph{not} model the inclusion of constant loops.
\end{example}

\subsection{Proof of Theorem \ref{Theorem1}} 

In this subsection we prove Theorem \ref{Theorem1}.

First, we show that  $\rho:C_\bullet(X) \to \mathbb{L}_\bullet(X)$ is a chain map, i.e., we show that $\rho(\del([\bi_0,\dots,\bi_k]))=D(\rho([\bi_0,\dots,\bi_k]))$, where $D$ is the differential defined in \eqref{necklaceD}. This will be done via a straightforward but lengthy computation. The left-hand side of our equation gives:
\begin{multline}\label{EQU:rho-d=}
\rho(\del([\bi_0,\dots,\bi_k]))\\
=
\ub{\rho([\bi_1,\dots,\bi_k])}_{\lb{EQU:rho-d=}{A}}
+\ub{\sum_{0<j<k}(-1)^j \rho([\bi_0,\dots,\wh{\bi_j},\dots,\bi_k])}_{\lb{EQU:rho-d=}{B}}
+\ub{(-1)^k \rho([\bi_0,\dots,\bi_{k-1}])}_{\lb{EQU:rho-d=}{C}}
\end{multline}
where $\wh{\bi_j}$ denotes that the index has been removed. The term in \lb{EQU:rho-d=}{A} appears in \eqref{EQU:sum_j-removeL} below. The terms in \lb{EQU:rho-d=}{B} are those where an index in the middle is missing; these terms are precisely those that appear in \lb{EQU:D(pq..q)}{B}, \lb{EQU:D(p..pq)}{A}, \lb{EQU:D(p..pq..q)}{A}, \lb{EQU:D(p..pq..q)}{D}, and \eqref{EQU:del-last} below. The term in \lb{EQU:rho-d=}{C} appears in \eqref{EQU:sum_j-removeR} below.

We now compute $D(\rho([\bi_0,\dots,\bi_k]))$ and check that it agrees with \eqref{EQU:rho-d=}. By \eqref{necklaceD}, $D$ operates on the tensor factors in \eqref{EQU:rho} in $\mathbb{P}_\bullet(X)$ by extending $\un{\wt{\partial}}+ \un{\wt{\Delta}}$ to a derivation.
\begin{itemize}
\item[$\blacksquare$]
For $r=1,\dots, \ell$, applying the differential to a tensor factor $\un{[\bi_{q_r},\bi_{p_r}]}$ gives zero, since $\underline{\wt{\partial}}$ and $\underline{\wt{\Delta}}$ vanish on ``edges'', i.e., $(\un{\wt{\partial}}+ \un{\wt{\Delta}})(\un{[\bi_{q_r},\bi_{p_r}]})=0$.
\item[$\blacksquare$]
Next, for $r=1,\dots, \ell-1$, we apply $D$ to $\un{[\bi_{p_{r}},\dots,\bi_{p_{r+1}},\bi_{q_{r}},\dots,\bi_{q_{r+1}}]}$.\\
Again, from \eqref{necklaceD} we apply $\un{\wt{\partial}}$ and $\un{\wt{\Delta}}$ to this term.
We need to distinguish five cases depending on whether $p_r$ and $p_{r+1}$ are equal, differ by one, or differ by more than one, and the same with $q_r$ and $q_{r+1}$.
\begin{itemize}
\item[{\bf Case 1:}] \quad $\un{[\bi_{p_{r}},\bi_{q_{r}},\bi_{q_{r+1}}]}$ 
   \tabto{4.8cm} (Thus: $p_r=p_{r+1}$ and $q_r+1=q_{r+1}$.)
\begin{equation}\label{EQU:D(pqq)}
(\un{\wt{\partial}}+ \un{\wt{\Delta}})(\un{[\bi_{p_{r}},\bi_{q_{r}},\bi_{q_{r+1}}]})
=
\ub{\un{[\bi_{p_{r}},\wh{\bi_{q_{r}}},\bi_{q_{r+1}}]}}_{\lb{EQU:D(pqq)}{A}}
-\ub{\un{[\bi_{p_{r}},\bi_{q_{r}}]}\ot \un{[\bi_{q_{r}},\bi_{q_{r+1}}]}}_{\lb{EQU:D(pqq)}{B}}
\end{equation}
\item[{\bf Case 2:}]\quad $\un{[\bi_{p_{r}},\bi_{q_{r}},\dots,\bi_{q_{r+1}}]}$  
   \tabto{4.8cm} (Thus: $p_r=p_{r+1}$ and $q_r+1<q_{r+1}$.)
\begin{align}\label{EQU:D(pq..q)} 
&\hspace{-2mm} (\un{\wt{\partial}}+ \un{\wt{\Delta}})(\un{[\bi_{p_{r}},\bi_{q_{r}},\dots, \bi_{q_{r+1}}]})
\\ \nonumber &\hspace{-2mm}=
\ub{\un{[\bi_{p_{r}},\wh{\bi_{q_{r}}},\dots,\bi_{q_{r+1}}]}}_{\lb{EQU:D(pq..q)}{A}}
+\ub{\scalebox{0.95}[1]{$\sum\limits_{q_r<j<q_{r+1}}(-1)^{j-q_r}\un{[\bi_{p_{r}},\bi_{q_{r}},\dots, \wh{\bi_{j}}\dots, \bi_{q_{r+1}}]}$}}_{\lb{EQU:D(pq..q)}{B}}
\\ \nonumber & \hspace{-1mm}
-\ub{\un{[\bi_{p_{r}},\bi_{q_{r}}]}\ot \un{[\bi_{q_{r}},\dots,\bi_{q_{r+1}}]}}_{\lb{EQU:D(pq..q)}{C}}
+\ub{\scalebox{0.9}[1]{$\sum\limits_{q_r<j<q_{r+1}}(-1)^{j-q_r+1}\un{[\bi_{p_{r}},\bi_{q_{r}},\dots, \bi_{j}]}\ot \un{[\bi_{j},\dots,\bi_{q_{r+1}}]}$}}_{\lb{EQU:D(pq..q)}{D}}
\end{align} 
\item[{\bf Case 3:}]\quad $\un{[\bi_{p_{r}},\bi_{p_{r+1}},\bi_{q_{r}}]}$   
   \tabto{4.8cm} (Thus: $p_r+1=p_{r+1}$ and $q_r=q_{r+1}$.)
   \begin{equation}\label{EQU:D(ppq)}
(\un{\wt{\partial}}+ \un{\wt{\Delta}})(\un{[\bi_{p_{r}},\bi_{p_{r+1}},\bi_{q_{r}}]})
=
\ub{\un{[\bi_{p_{r}},\wh{\bi_{p_{r+1}}},\bi_{q_{r}}]}}_{\lb{EQU:D(ppq)}{A}}
-\ub{\un{[\bi_{p_{r}},\bi_{p_{r+1}}]}\ot \un{[\bi_{p_{r+1}},\bi_{q_{r}}]}}_{\lb{EQU:D(ppq)}{B}}
\end{equation}
\item[{\bf Case 4:}] \quad$\un{[\bi_{p_{r}},\dots,\bi_{p_{r+1}},\bi_{q_{r}}]}$  
   \tabto{4.8cm} (Thus: $p_r+1<p_{r+1}$ and $q_r=q_{r+1}$.)
   \begin{align}\label{EQU:D(p..pq)} 
&\hspace{-1mm} (\un{\wt{\partial}}+ \un{\wt{\Delta}})(\un{[\bi_{p_{r}},\dots,\bi_{p_{r+1}},\bi_{q_{r}}]})
\\ \nonumber &
\hspace{-1mm}= 
\ub{\scalebox{0.8}[1]{$(-1)^{j-p_r+1}\sum\limits_{p_r<j<p_{r+1}}\un{[\bi_{p_{r}},\dots,\wh{\bi_{j}},\dots,\bi_{p_{r+1}},\bi_{q_{r}}]}$}}_{\lb{EQU:D(p..pq)}{A}}
+\ub{(-1)^{p_{r+1}-p_r+1}\un{[\bi_{p_{r}},\dots,\wh{\bi_{p_{r+1}}},\bi_{q_{r}}]}}_{\lb{EQU:D(p..pq)}{B}}
\\ \nonumber & \hspace{0mm}
+\ub{\scalebox{0.8}[1]{$\sum\limits_{p_r<j<p_{r+1}}(-1)^{j-p_r}\un{[\bi_{p_{r}},\dots, \bi_{j}]}\ot \un{[\bi_{j},\dots,\bi_{p_{r+1}},\bi_{q_{r}}]}$}}_{\lb{EQU:D(p..pq)}{C}}
+\ub{\scalebox{0.8}[1]{$(-1)^{p_{r+1}-p_r}\un{[\bi_{p_{r}},\dots,\bi_{p_{r+1}}]}\ot \un{[\bi_{p_{r+1}},\bi_{q_{r}}]}$}}_{\lb{EQU:D(p..pq)}{D}}
\end{align}
\item[{\bf Case 5:}]\quad $\un{[\bi_{p_{r}},\dots,\bi_{p_{r+1}},\bi_{q_{r}},\dots,\bi_{q_{r}}]}$  
   \tabto{4.8cm} (Thus: $p_r<p_{r+1}$ and $q_r<q_{r+1}$.)
\begin{align}\label{EQU:D(p..pq..q)} 
\hspace{10mm} & \hspace{-9mm}(\un{\wt{\partial}}+ \un{\wt{\Delta}})(\un{[\bi_{p_{r}},\ds, \bi_{p_{r+1}},\bi_{q_{r}},\ds,\bi_{q_{r+1}}]})
\\
\nonumber
= &
\ub{\scalebox{1}[1]{$\sum\limits_{p_r<j<p_{r+1}}(-1)^{j-p_r+1}\un{[\bi_{p_{r}},\ds, \wh{\bi_{j}},\ds, \bi_{p_{r+1}},\bi_{q_{r}},\ds,\bi_{q_{r+1}}]}$}}_{\lb{EQU:D(p..pq..q)}{A}} 
\\ \nonumber & 
+\ub{(-1)^{p_{r+1}-p_r+1}\un{[\bi_{p_{r}},\ds, \wh{\bi_{p_{r+1}}},\bi_{q_{r}},\ds,\bi_{q_{r+1}}]}}_{\lb{EQU:D(p..pq..q)}{B}}
\\ \nonumber &
+\ub{(-1)^{p_{r+1}-p_r}\un{[\bi_{p_{r}},\ds, \bi_{p_{r+1}},\wh{\bi_{q_{r}}},\ds,\bi_{q_{r+1}}]}}_{\lb{EQU:D(p..pq..q)}{C}} 
\\ \nonumber &
+\ub{\sum_{q_r<j<q_{r+1}}(-1)^{p_{r+1}-p_r+j-q_r}\un{[\bi_{p_{r}},\ds, \bi_{p_{r+1}},\bi_{q_{r}},\ds,\wh{\bi_{j}},\ds\bi_{q_{r+1}}]}}_{\lb{EQU:D(p..pq..q)}{D}}
\\ \nonumber & 
+\ub{\sum_{p_r<j\leq p_{r+1}}(-1)^{j-p_r}\un{[\bi_{p_{r}},\ds,\bi_{j}]}\ot \un{[\bi_j,\ds,\bi_{p_{r+1}},\bi_{q_{r}},\ds,\bi_{q_{r+1}}]}}_{\lb{EQU:D(p..pq..q)}{E}}
\\ \nonumber & 
+\ub{\sum_{q_r\leq j<q_{r+1}}(-1)^{p_{r+1}-p_r+j-q_r+1}\un{[\bi_{p_{r}},\ds,\bi_{p_{r+1}},\bi_{q_{r}},\ds, \bi_{j}]}\ot \un{[\bi_{j},\ds,\bi_{q_{r+1}}]}}_{\lb{EQU:D(p..pq..q)}{F}}
\end{align}
\end{itemize}
We next identify terms that combine and cancel with other terms.

$\bcr$ First, the terms where an internal index is removed appears in the formula \lb{EQU:rho-d=}{B}. These are the terms labeled \lb{EQU:D(pq..q)}{B}, \lb{EQU:D(p..pq)}{A}, \lb{EQU:D(p..pq..q)}{A}, and \lb{EQU:D(p..pq..q)}{D}.

$\bcr$ Next, we note that \lb{EQU:D(pqq)}{B} combines with the tensors to its left and right; more precisely, the term $\un{[\bi_{q_r}, \bi_{p_r}]}\ot $\lb{EQU:D(pqq)}{B}$\ot \un{[\bi_{q_{r+1}}, \bi_{p_{r+1}}]}$ simplifies to $-\un{[\bi_{q_r}, \bi_{q_{r+1}}]}\ot  \un{[\bi_{q_{r+1}}, \bi_{p_{r+1}}]}$, where we used the fact that tensor products of formal inverses of edges cancel. Similarly, if we apply the tensors to the left and right of \lb{EQU:D(pq..q)}{C}, we obtain $\un{[\bi_{q_r}, \bi_{p_r}]}\ot $\lb{EQU:D(pq..q)}{C}$\ot \un{[\bi_{q_{r+1}}, \bi_{p_{r+1}}]}=-\un{[\bi_{q_r}, \dots,\bi_{q_{r+1}}]}\ot \un{[\bi_{q_{r+1}}, \bi_{p_{r+1}}]}$. Thus for any $p_r=p_{r+1}$ and $q_r< q_{r+1}$ we obtain the new factors:
\begin{equation}\label{EQU:qs-on-the-left}
-\un{[\bi_{q_{r}},\dots, \bi_{q_{r+1}}]}\ot \un{[\bi_{q_{r+1}}, \bi_{p_{r+1}}]}
\end{equation}
(which is obtained from the expression $\un{[\bi_{q_r}, \bi_{p_r}]}\ot x \ot \un{[\bi_{p_{r+1}}, \bi_{q_{r+1}}]}$, where $x=$ \lb{EQU:D(pqq)}{B} for $q_r+1= q_{r+1}$, and $x=$ \lb{EQU:D(pq..q)}{C} for $q_r+1< q_{r+1}$).
Thus, in \eqref{EQU:qs-on-the-left} we effectively split off a tensor $\un{[\bi_{q_{r}},\dots, \bi_{q_{r+1}}]}$ (all with $q$ indices) on the left. These are the precisely the terms appearing in \lb{EQU:D(pq..q)}{D} and \lb{EQU:D(p..pq..q)}{F} when $\un{[\bi_{q_{r}},\dots, \bi_{q_{r+1}}]}$  is not the first tensor factor, and  \eqref{EQU:Del_R-last} (where $\un{[\bi_{j},\dots, \bi_{q_{1}}]}$ is rotated to the front) when $\un{[\bi_{q_{r}},\dots, \bi_{q_{r+1}}]}$ is the first tensor factor.

$\bcr$ Similarly, we can combine \lb{EQU:D(ppq)}{B}, or, respectively \lb{EQU:D(p..pq)}{D}, with the tensor factors before and after them (using that in this case $\bi_{q_r}=\bi_{q_{r+1}}$). With this, for any $p_r<p_{r+1}$ and $q_r=q_{r+1}$, we can rewrite the expression $ \un{[\bi_{q_r}, \bi_{p_r}]}\ot x \ot  \un{[\bi_{q_{r+1}}, \bi_{p_{r+1}}]}$ as:
\begin{equation}\label{EQU:ps-on-the-right}
(-1)^{p_{r+1}-p_r} \cdot \un{[\bi_{q_{r}}, \bi_{p_{r}}]} \ot \un{[\bi_{p_{r}},\dots, \bi_{p_{r+1}}]}
\end{equation}
where $x$ is either $x=$ \lb{EQU:D(ppq)}{B} for $p_r+1=p_{r+1}$, and $x=$ \lb{EQU:D(p..pq)}{D} for $p_r+1<p_{r+1}$. In \eqref{EQU:ps-on-the-right}, we thus split off a factor of $\un{[\bi_{p_{r}},\dots, \bi_{p_{r+1}}]}$ (using the ``$p$ indices'') on the right. These are the precisely the terms appearing in \lb{EQU:D(p..pq)}{C} and \lb{EQU:D(p..pq..q)}{E} when $\un{[\bi_{p_{r}},\dots, \bi_{p_{r+1}}]}$  is not the last tensor factor before $[\bi_{p_\ell},\dots,\bi_{q_1}]$, and \eqref{EQU:Del_L-last} when $\un{[\bi_{p_{r}},\dots, \bi_{p_{r+1}}]}$ is the last tensor factor before $[\bi_{p_\ell},\dots,\bi_{q_1}]$.

$\bcr$ Next, we also combine \lb{EQU:D(pqq)}{A} with its tensors appearing right before and after it. Using that we are in the case of $p_r=p_{r+1}$ and $q_r+1=q_{r+1}$, and the fact that formal inverses of edges cancel, we get that:
\begin{multline}\label{EQU:sum_j-removeR}
\un{[\bi_{q_r}, \bi_{p_r}]}\ot \text{\lb{EQU:D(pqq)}{A}}\ot \un{[\bi_{q_{r+1}}, \bi_{p_{r+1}}]}
\\
=\un{[\bi_{q_r}, \bi_{p_r}]}\ot \un{[\bi_{p_{r+1}},\bi_{q_{r+1}}]}\ot \un{[\bi_{q_{r+1}}, \bi_{p_{r+1}}]}
= \un{[\bi_{q_r}, \bi_{p_r}]}
\end{multline}
Note, that since $p_r=p_{r+1}$ and $q_r+1=q_{r+1}$, the tensor factor following \eqref{EQU:sum_j-removeR} (which is $\un{[\bi_{p_{r+1}},\dots,\bi_{p_{r+2}},\bi_{q_{r+1}},\dots, \bi_{q_{r+2}}]}$ when the tensor is not next to the last) does not give a term of the form required by our formula \eqref{EQU:rho}, since it has an index $\bi_{q_r}$ missing. These are precisely the terms appearing and canceling with \lb{EQU:D(pq..q)}{A} and \lb{EQU:D(p..pq..q)}{C}. When the tensor is on the next to last spot (i.e., $r=\ell-1$), we necessarily have $q_{r+1}=k$ (see \eqref{EQU:S^l_k}), so that missing the corresponding index gives precisely the terms from \lb{EQU:rho-d=}{C} where $\bi_{k}$ is missing.

$\bcr$ Finally, we combine \lb{EQU:D(ppq)}{A} with its left and right tensor factors, just as we did in the last bullet point. Here, we have that $p_r+1=p_{r+1}$ and $q_r=q_{r+1}$, and we get:
\begin{multline}\label{EQU:sum_j-removeL}
\un{[\bi_{q_r}, \bi_{p_r}]}\ot \text{\lb{EQU:D(ppq)}{A}}\ot \un{[\bi_{q_{r+1}}, \bi_{p_{r+1}}]}
\\
=\un{[\bi_{q_r}, \bi_{p_r}]}\ot \un{[\bi_{p_{r}},\bi_{q_{r}}]}\ot \un{[\bi_{q_{r+1}}, \bi_{p_{r+1}}]}
= \un{[\bi_{q_{r+1}}, \bi_{p_{r+1}}]}
\end{multline}
Now, still using that $p_r+1=p_{r+1}$ and $q_r=q_{r+1}$, the tensor factor right before \lb{EQU:sum_j-removeL}{B} (that is, $\un{[\bi_{p_{r-1}},\dots,\bi_{p_{r}},\bi_{q_{r-1}},\dots, \bi_{q_{r}}]}$ when the tensor is not the first factor) misses $\bi_{p_{r+1}}$ when compared to the formula in \eqref{EQU:rho}. This term thus cancels with \lb{EQU:D(p..pq)}{B} and \lb{EQU:D(p..pq..q)}{B}, unless \eqref{EQU:sum_j-removeL} appears in the first tensor factor. In the case where \eqref{EQU:sum_j-removeL} does appear as the first tensor factor (i.e., when $r=1$), we have $p_1=0$ (by \eqref{EQU:S^l_k}), and thus our first tensor \eqref{EQU:sum_j-removeL} is $\un{[\bi_{q_1},\bi_1]}$. It follows that in this case the index $\bi_0$ is missing, which are precisely the terms in \lb{EQU:rho-d=}{A}.
\item[$\blacksquare$]
Lastly, we apply $D$ to the last tensor factor in \eqref{EQU:rho}, i.e., to $[\bi_{p_\ell},\dots,\bi_{q_1}]$.\\
From \eqref{necklaceD} and \eqref{EQU:delta-in-P} we see that $D$ applies to the last tensor factor  via $\wt{\del}$, $\unDel_L$, and $\unDel_R$ (where, after taking $\unDel_L$ and $\unDel_R$, the shifted tensor factors are combined into $\mathbb{P}_\bullet(X)$, and for $\unDel_R$ there is also a cyclic rotation of the tensor factors; cf. \eqref{necklaceD} and \eqref{EQU:delta-in-P}).
\begin{align}
\label{EQU:del-last}&\wt{\del}([\bi_{p_\ell},\dots,\bi_{q_1}])
=\sum_{p_\ell<j<q_1} (-1)^{j-p_\ell} [\bi_{p_\ell},\dots,\wh{\bi_{j}},\dots, \bi_{q_1}] 
\\
\label{EQU:Del_L-last}&\unDel_L([\bi_{p_\ell},\dots,\bi_{q_1}])
=
\sum_{p_\ell<j\leq q_1} \un{[\bi_{p_\ell},\dots,\bi_{j}]}\ot [\bi_{j},\dots,\bi_{q_1}]
\\
\label{EQU:Del_R-last}&\unDel_R([\bi_{p_\ell},\dots,\bi_{q_1}])
=\sum_{p_\ell\leq j<q_1} (-1)^{j-p_\ell+1} [\bi_{p_\ell},\dots,\bi_{j}]\ot \un{[\bi_{j},\dots,\bi_{q_1}]}
\end{align}

The terms in \eqref{EQU:del-last} (missing $\bi_{j}$ altogether) appear in \lb{EQU:rho-d=}{B}.  
The terms in \eqref{EQU:Del_L-last} cancel with \eqref{EQU:ps-on-the-right} in the case where it is applied to the last tensor factor before $[\bi_{p_\ell},\dots,\bi_{q_1}]$.
The terms in \eqref{EQU:Del_R-last} appear in \eqref{EQU:qs-on-the-left} after 
rotating the last tensor factor to the beginning.
\end{itemize}

Thus, all terms can be identified and cancel, which completes the proof of the fact that $\rho$ is a chain map.

Finally, we show that $\rho$ models the constant loops map. By Theorem \ref{necklacesandloopspace}, there is an isomorphism 
\begin{eqnarray}
\varphi \colon H_\bullet (\mathbb{L}_\bullet(X)) \xrightarrow{\cong} H_\bullet(L|X|),
\end{eqnarray}
where $H_\bullet(L|X|)$ denotes the singular homology of $L|X|$. (See also \cite{RS} where a similar isomorphism is constructed via the cellular chains on a cell complex having the homotopy type of $L|X|$.) 

We argue that
\begin{eqnarray}
   \varphi \circ H_\bullet(\rho)=H_\bullet(i) \colon H_\bullet(X) \to H_\bullet(L|X|),
\end{eqnarray}
where $i$ is the chain map defined as the composition \[i \colon C_\bullet(X) \xrightarrow{f} S_\bullet(|X|) \xrightarrow{S_\bullet(\iota)} S_\bullet(L|X|),\] for $f$ the natural map from simplicial chains on $X$ to singular chains on $|X|$ and $\iota$ the embedding of constant loops. This follows from the Acyclic Models Theorem (\cite[Theorem Ib]{EM}). In fact, on the set of standard simplices simplices $\mathcal{M}= \{ \Delta^0, \Delta^1, \Delta^2, \ldots \}$, the simplicial chains functor $C_\bullet$ is free, the functor $\mathbb{L}_\bullet$ is acyclic, and the natural maps $\varphi\circ H_0(\rho)$ and $H_0(i)$ coincide. The Acyclic Models Theorem implies that there is a natural extension, unique up to natural chain homotopy, of $\varphi\circ H_0(\rho)=H_0(i)$ to the chain level.  

\section{An alternate model for the constant loops map}

We now describe in detail the structure used in the definition of $\chi \colon C_\bullet(X) \to \mathbb{L}_\bullet(X)$ in \eqref{EQU:chi} and prove Theorem \ref{Theorem2}. This provides a conceptual description for the sub-complex of constant loops inside $\mathbb{L}_\bullet(X)$ in terms of higher structure associated to the underlying chains $C_\bullet(X)$.

Recall that, assuming $R$ is a field, one may associate to any pointed topological space $(Z,b)$ a graded cocommutative Hopf $R$-algebra $H_\bullet(\Omega_bZ;R)$ given by the $R$-homology of the space of (Moore) loops in $Z$ based at $b$ with product induced by concatenation of loops and coproduct induced by the diagonal map and the K\"unneth theorem. The antipode is induced by the map sending a loop to its inverse. A base point free version of this construction associates to any topological space $Z$ a category enriched in graded cocommutative $R$-coalgebras. This is constructed by first considering $\mathcal{P}Z$, the topologically enriched category of (Moore) paths in $Z$ with concatenation as composition and then applying the $R$-homology functor on each morphism space $\mathcal{P}Z(a,b)$ to obtain a category $\mathbb{H}_\bullet(\mathcal{P}Z)$ enriched over graded cocommutative $R$-coalgebras. 

In what follows, for any simplicial complex $X$, we describe a chain-level lift to $(\mathbb{P}_\bullet(X),d)$ of the structure described above on $H_\bullet(\mathcal{P}|X|)$. When working at the chain level, we may allow coefficients in an arbitrary commutative ring $R$, since we never apply the K\"unneth theorem. More precisely, in \ref{SEC:nabla_0} and  \ref{SEC:S}, we describe an enrichment of $(\mathbb{P}_\bullet(X),d,\mu)$ in the category of dg coassociative coalgebras with the property of having an antipode. Then, in \ref{SEC:nabla_1}, we describe a compatible chain homotopy for the cocommutativity of the coalgebra structure of this enrichment.

The formulas in Sections \ref{SEC:nabla_0} and \ref{SEC:nabla_1} are drawn from \cite{B}. The main idea behind finding these formulas is that each chain complex $\mathbb{P}_\bullet(X)(a,b)$ may be identified with the chain complex of normalized chains on a cubical set, which has a natural diagonal chain approximation (originally defined by Serre) for which a compatible chain homotopy for cocommutativity can be given explicitly. In a sense, Sections \ref{SEC:nabla_0},  \ref{SEC:S}, and \ref{SEC:nabla_1} below are recollections of structures already constructed in the literature, but adapted to our setting to prove Theorem \ref{Theorem2} in Section \ref{proof1.2}.

\subsection{The coproduct $\nabla_0 \colon \mathbb{P}_\bullet(X) \to \mathbb{P}_\bullet(X) \otimes \mathbb{P}_\bullet(X)$}\label{SEC:nabla_0}

We describe a dg coassociative coalgebra structure on $\mathbb{P}_\bullet(X)$ generalizing a construction from \cite{B}, where it is described in the setting of $1$-reduced simplicial sets. An more general construction in the setting of arbitrary simplicial sets is discussed in \cite[Section 4.1]{R}.

\begin{definition}
Given any $a,b \in X_0$ denote by \[\mathbb{P}_\bullet(X)(a,b)= R\langle\{a\}\rangle \square \big( \bigoplus_{p=0}^{\infty} C_\bullet(X)^{\square p} \big) \square R\langle \{b\} \rangle. \] 
In other words, $\mathbb{P}_\bullet(X)(a,b) \subseteq \mathbb{P}_\bullet(X)$ is generated by all those $(\sigma_1 | \cdots |\sigma_p)$ such that $\mathsf{s}(\sigma_1)=a$ and $\mathsf{t}(\sigma_p)=b$. Note that $d \colon \mathbb{P}_\bullet(X) \to \mathbb{P}_{\bullet-1}(X)$ restricts to a map 
\[d\colon \mathbb{P}_\bullet(X)(a,b) \to \mathbb{P}_{\bullet-1}(X) (a,b),\]
so each $(\mathbb{P}_\bullet(X)(a,b),d)$ is a sub-complex of $(\mathbb{P}_\bullet(X),d)$. Define a coproduct 
\begin{equation}
\nabla_0 \colon \mathbb{P}_\bullet(X)(a,b) \to \mathbb{P}_\bullet(X)(a,b) \otimes \mathbb{P}_\bullet(X)(a,b)
\end{equation}
on any $(\sigma) \in \mathbb{P}_{k-1}(X)(a,b)$ (for simplicity denoted by $\sigma=[0, 1, \ldots, k] \in X_k$, where we assume $a=0$ and $b=k$) by
\begin{multline*} 
\nabla_0(\sigma)= 
    \underset{ \{\bi=(\bi_1,\ldots,\bi_l )  | 0<\bi_1<\ldots < \bi_l <k \} }{\sum}(-1)^{\epsilon(\bi)}  
    \\ \cdot\Big( [0, \ldots, \bi_1]\Big| [\bi_1, \ldots, \bi_2]\Big| \cdots \Big| [\bi_{l}, \dots, k]\Big)  \otimes \Big([0,\bi_1, \bi_2, \dots, \bi_l,k]\Big),
\end{multline*}
where $\epsilon(\bi)= \sum_{j=1}^{l+1}(j-1)(\bi_j-\bi_{j-1})$ interpreting $\bi_0=0$ and $\bi_{l+1}=k$ when $I=(\bi_1, \ldots, \bi_l)$. In particular, when $\sigma=[a,b]$, then $\nabla_0([a,b])= \big([a,b]\big) \otimes \big([a,b]\big)$.
Note that the second factor in $\nabla_0(\sigma)$ is always a sequence of length $1$.

The map $\nabla_0$ is extended to generators $(\sigma_1 | \cdots | \sigma_p\big)$ with $p>1$ as a monoid map. The counit
\begin{eqnarray}\label{counit}
\varepsilon \colon \mathbb{P}_{\bullet}(X)(a,b) \to R
\end{eqnarray}
is determined by $\varepsilon(\sigma)=1_R$ if $\sigma \in X_1 \cup X_1^{-1}$ and $\varepsilon(\sigma)=0$ if $\sigma \in X_k$ for $k>1$. The coproducts $\nabla_0$ on the $\mathbb{P}_\bullet(X)(a,b)$ induce a coproduct on $\mathbb{P}_\bullet(X)= \bigoplus_{a,b \in X_0} \mathbb{P}_\bullet(X)(a,b)$, which we also denote by $\nabla_0$. 
\end{definition}

\begin{proposition}\label{nabla_0equations}
For each $a,b \in X_0$, $(\mathbb{P}_\bullet(X)(a,b), d, \nabla_0)$ is a dg coassociative  coalgebra with counit $\varepsilon \colon \mathbb{P}_{\bullet}(X)(a,b) \to R$ and these satisfy the bimonoid compatibility:
\begin{eqnarray}
    \nabla_0 \circ\mu= (\mu \otimes \mu) \circ (\text{id} \otimes \tau \otimes \text{id}) \circ (\nabla_0 \otimes \nabla_0)
    \end{eqnarray}
\end{proposition}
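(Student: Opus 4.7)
My plan is to verify the four conclusions---coassociativity, counit, chain-map property of $\nabla_0$, and the bimonoid compatibility with $\mu$---in the order that best exploits the fact that $\nabla_0$ is defined on longer bar sequences as a monoid map. The bimonoid identity $\nabla_0 \circ \mu = (\mu\otimes\mu)\circ(\text{id}\otimes\tau\otimes\text{id})\circ(\nabla_0\otimes\nabla_0)$ is then essentially tautological: on a product generator $(\sigma_1|\cdots|\sigma_p)$ both sides unfold to the same iterated shuffle of the $\nabla_0(\sigma_i)$'s, and associativity of $\mu$ propagates the identity from length-two products to arbitrary lengths. The counit $\varepsilon$ is likewise multiplicative, so the coalgebra axioms for $(\mathbb{P}_\bullet(X)(a,b),d,\nabla_0,\varepsilon)$ on the whole complex reduce to checks on a single-bar generator $(\sigma)$ with $\sigma=[0,1,\ldots,k]\in X_k$.

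For those single-bar checks I would invoke the identification alluded to in the introductory paragraph of Section~\ref{SEC:nabla_0}: the subcomplex of $\mathbb{P}_\bullet(X)(0,k)$ generated by reduced bar sequences built from sub-simplices of $\sigma$ is naturally isomorphic to the normalized chains on the $(k-1)$-cube, and under this isomorphism the defining formula for $\nabla_0(\sigma)$ becomes precisely Serre's diagonal approximation. Coassociativity and the counit property then follow from the corresponding well-known facts for Serre's diagonal, so one is merely transcribing. For readers who prefer a direct check, the counit property on $(\sigma)$ amounts to observing that $\varepsilon$ is supported on sequences of $1$-simplices, so $(\varepsilon\otimes\text{id})\nabla_0(\sigma)$ picks out the unique summand indexed by $I=(1,2,\ldots,k-1)$ and returns $(\sigma)$, while coassociativity follows by reindexing the double sum $(\nabla_0\otimes\text{id})\nabla_0(\sigma)=(\text{id}\otimes\nabla_0)\nabla_0(\sigma)$ via pairs of nested subsets of $\{1,\ldots,k-1\}$.

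The most delicate step is verifying that $\nabla_0$ commutes with the differential $d = \underline{\wt{\partial}}+\underline{\wt{\Delta}}$. On $(\sigma)$, the term $\nabla_0 d(\sigma)$ contains contributions from deleting an interior vertex of $\sigma$ (face operations) and from splitting $\sigma$ at an interior vertex (cobar-cooproduct operations). These must be matched against $(d\otimes\text{id}+\text{id}\otimes d)\nabla_0(\sigma)$, where $d$ acts on the bars $[i_{j-1},\ldots,i_j]$ of the first tensor factor and on the second factor $[0,i_1,\ldots,i_l,k]$ by both deletions and further splittings. The matching pairs a splitting of a bar in the first tensor factor at a new index with the insertion of that index into $I$, while face deletions in each factor correspond to simultaneous index deletions, and the reduced relation handles adjacent $1$-simplex inverses. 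The main obstacle is the sign bookkeeping; to avoid redoing it from scratch I would lean on the cubical-chain identification and invoke the classical fact that Serre's diagonal is a chain map for the cubical boundary, following \cite{B}.
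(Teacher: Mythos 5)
Your proposal follows essentially the same route as the paper, which likewise reduces everything to the identification of $\mathbb{P}_\bullet(X)(a,b)$ with normalized cubical chains and defers the substantive verifications (in particular that $\nabla_0$ commutes with $d$) to Serre's diagonal as treated in \cite[Theorem 2.10]{B} and \cite[Section 4]{R}; your observations that the bimonoid identity is built into the multiplicative extension of $\nabla_0$ and that the extension to $X_1 \cup X_1^{-1}$ by grouplike elements is harmless are exactly how the paper frames the reduction. The one point to watch in your direct sketch is the left-counit check: the summand indexed by $I=(1,\ldots,k-1)$ carries the sign $(-1)^{\epsilon(\bi)}$, which must be tracked rather than asserted to be $+1$, but this is precisely the sign bookkeeping that you (and the paper) delegate to the cubical-chain picture.
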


For a proof of the above, see the more general discussion in \cite[Section 4]{R} or a special case in \cite[Theorem 2.10]{B}. However, the proposition may be verified by direct calculation. Note that the formula for $\nabla_0$ is the same as formula \cite[2.9(3)]{B} extended to include elements $\sigma \in X_1 \cup X_1^{-1}$, where it is defined as $\nabla_0(\sigma)=\sigma \otimes \sigma$. 

\subsection{The antipode $S \colon \mathbb{P}_\bullet(X) \to \mathbb{P}_\bullet(X)$}\label{SEC:S}

We now show that the bimonoid $(\mathbb{P}_\bullet(X),d, \mu, \nabla_0)$ has the property of having an antipode map. (Just in like the bialgebra setting, if an antipode exists, it is unique.)

\begin{proposition} \label{antipodeequations}
There is a chain map $S \colon \mathbb{P}_\bullet(X) \to \mathbb{P}_\bullet(X)$ satisfying the following antipode equations for any $x \in \mathbb{P}_\bullet(X)(a,b)$

\begin{align} 
\mu \circ (\text{id} \otimes S) \circ \nabla_0 (x) &= \varepsilon(x) \text{id}_a,\\
 \mu \circ (S \otimes \text{id}) \circ \nabla_0(x)
&= \varepsilon(x) \text{id}_b,
\end{align}
and
\begin{eqnarray}
S  \circ \mu = \mu \circ (S \otimes S) \circ \tau
\end{eqnarray}
\end{proposition}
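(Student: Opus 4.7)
The plan is to construct $S$ by induction on the dimension of simplices, using the antipode equations themselves to solve recursively, and then verify all three required identities. The guiding observation is that while $\mathbb{P}_\bullet(X)$ contains many group-like elements (one for each edge), each such edge has an explicit formal inverse thanks to the reduction relation, which plays the role of the formal inversion a Hopf-algebra antipode construction normally relies on.

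First, I would set $S(\text{id}_a) = \text{id}_a$, $S([a,b]) = ([b,a])$, and $S([b,a]) = ([a,b])$ on $0$- and $1$-simplices (matching the stipulation from the introduction), and then extend $S$ to arbitrary sequences $(\sigma_1 | \cdots | \sigma_p)$ via the antihomomorphism rule $S(x \cdot y) = S(y) \cdot S(x)$, so that the third equation of Proposition \ref{antipodeequations} holds by construction. This extension is well defined modulo the reduction $\sigma \sigma^{-1} \sim \text{id}$, because $S$ sends consecutive inverse pairs to consecutive inverse pairs.

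For the inductive step, given $\sigma = [\bi_0, \ldots, \bi_k]$ with $k \geq 2$, I inspect $\nabla_0(\sigma)$. Exactly one summand, the one indexed by $I = (1, 2, \ldots, k-1)$, has the form $\pm E_\sigma \otimes (\sigma)$, where $E_\sigma = ([\bi_0,\bi_1] | \cdots | [\bi_{k-1},\bi_k])$ is the edge path through $\sigma$; every other summand has its second tensor factor equal to a simplex of dimension strictly less than $k$, on which $S$ is already defined. Since $E_\sigma$ has an explicit two-sided inverse $E_\sigma^{-1} = ([\bi_k,\bi_{k-1}] | \cdots | [\bi_1,\bi_0])$ in $\mathbb{P}_\bullet(X)(\bi_0,\bi_0)$ via the reduction relation, imposing the first antipode equation $\mu \circ (\text{id} \otimes S) \circ \nabla_0(\sigma) = 0$ (which is what is required, since $\varepsilon(\sigma)=0$) uniquely determines $S(\sigma)$ upon left-multiplication by $E_\sigma^{-1}$, in terms of lower-dimensional data.

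It remains to verify the two other identities and the chain-map property. The first antipode equation extends from simplices to arbitrary sequences by induction on word length, using the bimonoid compatibility of Proposition \ref{nabla_0equations} together with the antihomomorphism property of $S$. The second antipode equation is obtained by the symmetric construction using the $l=0$ summand $(\sigma) \otimes ([\bi_0,\bi_k])$ of $\nabla_0(\sigma)$, whose second factor $([\bi_0,\bi_k])$ is also invertible, combined with a coassociativity argument to identify the two resulting maps. The chain-map identity $d \circ S = S \circ d$ then follows by induction on dimension, by applying $d$ to the defining recursion and using that $d$ is a derivation of $\mu$ and a coderivation of $\nabla_0$. The main obstacle is not conceptual but combinatorial: because $S$ changes source and target of each morphism, the classical convolution-inverse uniqueness argument for Hopf algebras does not apply verbatim, and some care is needed to identify the maps produced by the two recursions. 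A cleaner route, indicated by the paper's citation of \cite{B}, is to identify each $\mathbb{P}_\bullet(X)(a,b)$ with the normalized chains on an associated cubical set and to import $S$ from the natural path-reversal involution, reducing the antipode equations to standard cubical identities already in the literature.
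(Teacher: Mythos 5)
Your proposal is correct and follows essentially the same route as the paper: define $S$ on edges by reversal, extend as an antihomomorphism so the third equation holds by construction, and then recursively solve the first antipode equation for $S(\sigma)$ by isolating the unique summand $\pm E_\sigma \otimes (\sigma)$ of $\nabla_0(\sigma)$ and left-multiplying by the invertible edge path $E_\sigma^{-1}$. The paper leaves the verification of the second antipode equation and the chain-map property as a ``routine calculation,'' so your additional remarks on those points (and on the cubical alternative from \cite{B}) only elaborate on what the paper omits.
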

\begin{proof}
We define $S_{k-1}(\sigma) \in \mathbb{P}_{k-1}(X)$ for any $\sigma \in X_k$, $k \geq 1$, by induction on $k$ and then we let $S\big(\sigma_1 | \cdots | \sigma_p\big)= \pm \big(S_{|\sigma_p|-1}(\sigma_p)| \ldots |S_{|\sigma_1|-1}(\sigma_1)\big)$.

If $k=1$ then $\sigma=[a,b]$, for some $a,b \in X_0$, and define $S_0( [a,b]) = [b,a]$. The antipode equations are immediately satisfied for $S_0$. 

Suppose we have defined $S_l$ for all $l<k-1$ and let $\sigma=[0,\ldots,k] \in X_k$. Write 
\begin{align*}
\nabla_0([0,\ldots,k] )=&
(-1)^{1+\cdots +(k-1)} \cdot\big([0,1]|\cdots|[k-1,k]\big) \otimes \big( [0,\ldots,k]\big)
\\ &\quad\quad + \widetilde{\nabla}_0([0,\ldots,k]) + \big([0,\ldots, k]\big) \otimes \big([0,k]\big),
\end{align*}
where $\widetilde{\nabla}_0([0,\ldots,k])$ only involves simplices of dimension less than $k$. We want to define $S_{k-1}([0,\ldots, k])$ so that the antipode equations hold, in particular, we want 
\begin{align*} 
\mu \circ (\text{id} \otimes S) \circ \nabla_0([0,\ldots,k] ) = &
(-1)^{1+\cdots +(k-1)} \big([0,1]|\cdots|[k-1,k]\big)S_{k-1}\big( [0,\ldots,k]\big) 
\\& + \mu \circ (\text{id} \otimes S) (\widetilde{\nabla}_0([0,\ldots,k])) 
 + \big([0,\ldots, k] | [k,0] \big)
 \\  = & 0
\end{align*}
since $\varepsilon([0,\dots,k])=0$ when $k>1$. Solving for $S_{k-1}([0,\ldots, k])$ by taking inverses on both sides, forces us to define
\begin{align*}
    S_{k-1}\big( [0,\ldots,k] \big) = &(-1)^{2+\cdots + (k-1)} \big([k,k-1]|\cdots|[2,1] |[1,0]| \mu \circ (\text{id} \otimes S)(\widetilde{\nabla}_0([0,\ldots,k])) \big)
    \\
    &+ (-1)^{2+\cdots + (k-1)}\big([k,k-1]|\cdots|[2,1] |[1,0]|[0,\ldots, k] | [k,0]\big).
\end{align*} 
All terms on the right hand side are defined by induction. A routine calculation yields that the second antipode equation is also satisfied and that $S$ is a chain map.

\end{proof}

\subsection{The chain homotopy $\nabla_1 \colon \mathbb{P}_\bullet(X) \to \mathbb{P}_\bullet(X) \otimes \mathbb{P}_\bullet(X)$}\label{SEC:nabla_1}

We describe a chain homotopy between $\nabla_0$ and $\nabla_0^{\text{op}}=\tau \circ \nabla_0$ satisfying an appropriate derivation compatibility. 
\begin{definition}
Given any $a,b\in X_0$ we define a degree $+1$ coproduct
\begin{eqnarray}
    \nabla_1 \colon \mathbb{P}_\bullet(X)(a,b) \to \mathbb{P}_\bullet(X)(a,b)  \otimes \mathbb{P}_\bullet(X)(a,b) 
\end{eqnarray}
as follows. On $\mathbb{P}_0(X)(a,b)$, $\nabla_1$ is declared to be zero. For any $\sigma=[0,\ldots,k] \in X_k$ and $k>1$ we define
\begin{align*}
\nabla_1([0,\ldots,k]) = &
\sum_{l=1}^{k-1} \underset{((\bi_1,\ldots, \bi_m),(\bj_1,\ldots, \bj_n)) \in \mathcal{T}(k,l)}{\sum} (-1)^{\epsilon(\bi,\bj)} 
\\ 
& \cdot\Big([0,\bi_1,\ldots,\bi_m, l,l+1,\ldots,\bj_1]\Big|[\bj_1, \ldots, \bj_2]\Big| \cdots \Big|[\bj_n, \ldots, k]\Big)
\\
& \otimes\Big([0,\ldots \bi_1]\Big|[\bi_1, \ldots, \bi_2]\Big| \cdots \Big|[\bi_m, \ldots, l,l+1, \ldots, \bj_1, \bj_2, \ldots, \bj_n, k]\Big),
\end{align*}
where the indexing set is 
\begin{multline*}
    \mathcal{T}(k,l)= \{ (\bi=(\bi_1,\ldots, \bi_m),\bj=(\bj_1,\ldots, \bj_n)) | 
\\
0<\bi_1< \cdots <\bi_m < l \text{ and } l <\bj_1 < \cdots < \bj_n <  k\}
\end{multline*}
and for any $(\bi,\bj) \in \mathcal{T}(k,l)$ the corresponding sign is given by
\[\epsilon(\bi,\bj)= \sum_{r=1}^{m+1} (r-1) (\bi_r - \bi_{r-1}) + \sum_{s=1}^{n+1} (s-1) (\bj_s - \bj_{s-1}) + (m + l)(n+k).\]
In particular, $\nabla_1([a,b])=0$ for any $[a.b] \in X_1 \cup X_1^{-1}$ and $\nabla_1([a,z,b])= \big([a,z,b]\big) \otimes \big([a,z,b]\big)$ for any $[a,z,b] \in X_2$. For any $\big(\sigma_1 | \cdots | \sigma_p\big)$, $p>1$, we extend $\nabla_1$ as
\begin{align*} \nabla_1 \big(\sigma_1 | \cdots | \sigma_p\big) =& 
\sum_{r=1}^p (-1)^{|\sigma_1|+ \cdots + |\sigma_{r-1}| -r+1} 
\\ & \cdot \nabla_0^{\text{op}}(\sigma_1) \cdot \ldots \cdot \nabla_0^{\text{op}}(\sigma_{r-1}) \cdot \nabla_1(\sigma_{r}) \cdot \nabla_0(\sigma_{r+1}) \cdot \ldots \cdot\nabla_0(\sigma_{p}),
\end{align*}
where the $\cdot$ symbol denotes the product \[\mu_{\mathbb{P}_\bullet(X) \otimes \mathbb{P}_\bullet(X)} \colon 
(\mathbb{P}_\bullet(X) \otimes \mathbb{P}_\bullet(X)) \square (\mathbb{P}_\bullet(X) \otimes \mathbb{P}_\bullet(X))  \to \mathbb{P}_\bullet(X) \otimes \mathbb{P}_\bullet(X)\]
induced by $\mu$. 
\end{definition}
The following proposition follows from a straightforward computation, which may also be found in the proof of \cite[Theorem 2.2]{B}.

\begin{proposition} \label{nabla_1equations}
For each $a, b\in X_0$, $\nabla_1$ induces a coproduct  \[\nabla_1 \colon \mathbb{P}_\bullet(X)(a,b)\to \mathbb{P}_\bullet(X)(a,b)\otimes \mathbb{P}_\bullet(X)(a,b)\]
satisfying
\begin{eqnarray} \label{nabla1chainhomotopy}
    (d\otimes \text{id} + \text{id} \otimes d) \circ \nabla_1 + \nabla_1 \circ d = \nabla_0^{\text{op}} - \nabla_0
\end{eqnarray}
and
\begin{eqnarray} \label{nabla_1derivation}
\nabla_1 \circ \mu = \mu_{\mathbb{P}_\bullet(X) \otimes \mathbb{P}_\bullet(X)} \circ (\nabla_0^{\text{op}} \otimes \nabla_1 + \nabla_1 \otimes \nabla_0).
\end{eqnarray}

\end{proposition}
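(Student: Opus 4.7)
The plan is to verify the two identities in turn, reducing both to the combinatorics on a single-simplex bead via a Leibniz-type compatibility.

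First, the derivation identity \eqref{nabla_1derivation} is essentially built into the definition of $\nabla_1$ on concatenated necklaces: the formula $\nabla_1(\sigma_1|\cdots|\sigma_p)$ is a sum, indexed by the position $r$ at which the single-bead map $\nabla_1$ is applied, of products with $\nabla_0^{\text{op}}$ to the left and $\nabla_0$ to the right of $\nabla_1(\sigma_r)$. Given generators $x=(\sigma_1|\cdots|\sigma_p)$ and $y=(\tau_1|\cdots|\tau_q)$, both sides of \eqref{nabla_1derivation} applied to $x\square y$ expand as the same sum over a single distinguished bead, and a comparison of signs using $\deg x=|\sigma_1|+\cdots+|\sigma_p|-p$ completes this step. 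Along the way one checks that $\nabla_1$ descends through the relation $([a,b]|[b,a])\sim\text{id}$: since $\nabla_0([a,b])=([a,b])\otimes([a,b])$ and $\nabla_1([a,b])=0$, the contributions produced by the derivation formula on a pair $(\sigma|\sigma^{-1})$ collapse correctly under the quotient.

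Next, for \eqref{nabla1chainhomotopy} I first reduce to the case of a single-simplex bead. Setting $F(x):=(d\otimes\text{id}+\text{id}\otimes d)\nabla_1(x)+\nabla_1(dx)$, a short computation using \eqref{nabla_1derivation}, the $\mu$-derivation property of $d$, and Proposition \ref{nabla_0equations} (in particular that $\nabla_0,\nabla_0^{\text{op}}$ are monoid maps) shows that
\[F(xy) = F(x)\cdot\nabla_0(y) + \nabla_0^{\text{op}}(x)\cdot F(y),\]
which is exactly the Leibniz rule also satisfied by the map $x\mapsto \nabla_0^{\text{op}}(x)-\nabla_0(x)$. Hence if \eqref{nabla1chainhomotopy} holds on single-simplex beads, it propagates to all concatenations. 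The $1$-simplex case is immediate since $\nabla_0([a,b])=\nabla_0^{\text{op}}([a,b])$, $\nabla_1([a,b])=0$, and $d([a,b])=0$. For $\sigma=[0,\ldots,k]$ with $k\geq 2$, I would expand $(d\otimes\text{id}+\text{id}\otimes d)\nabla_1(\sigma)$ and $\nabla_1(d\sigma)=\nabla_1(\underline{\widetilde\partial}\sigma)+\nabla_1(\underline{\widetilde\Delta}\sigma)$ (the latter reducible via \eqref{nabla_1derivation} to combinations of $\nabla_0,\nabla_0^{\text{op}},\nabla_1$ on lower-dimensional simplices) into sums indexed by the splitting position $l$ and interleaved subsequences $(\mathbf{i},\mathbf{j})\in\mathcal{T}(k,l)$. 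Internal-index-removal terms in the first expansion cancel against matching terms from $\nabla_1(\underline{\widetilde\partial}\sigma)$, bead-splitting contributions from $\underline{\widetilde\Delta}$ cancel against $\nabla_1(\underline{\widetilde\Delta}\sigma)$, and the surviving extreme terms corresponding to $\mathbf{i}=\varnothing$ or $\mathbf{j}=\varnothing$ at the boundary values of $l$ assemble into $\nabla_0^{\text{op}}(\sigma)-\nabla_0(\sigma)$.

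The main obstacle will be the sign bookkeeping. The sign $\epsilon(\mathbf{i},\mathbf{j})$ combines subsequence positions, the splitting index $l$, and the cross term $(m+l)(n+k)$ arising from a switch of cube factors, so each of the many pairwise cancellations has to be checked individually. The conceptual guide, following \cite[Section 2]{B}, is the identification of the sub-complex of $\mathbb{P}_\bullet(X)(a,b)$ supported on a fixed simplex with the normalized chains on a cubical set, under which $\nabla_0$ becomes Serre's diagonal approximation and $\nabla_1$ becomes Steenrod's $\smile_1$-chain homotopy; in that guise \eqref{nabla1chainhomotopy} becomes the classical relation $d\smile_1+\smile_1 d=\smile-\tau\circ\smile$, which both templates the cancellations and guarantees that the signs ultimately work out.
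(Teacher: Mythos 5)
Your proposal is correct, and it is worth noting that it is \emph{more} detailed than the paper's own treatment: the paper offers no computation at all, stating only that the proposition ``follows from a straightforward computation, which may also be found in the proof of \cite[Theorem 2.2]{B}.'' Your handling of \eqref{nabla_1derivation} is right --- it is indeed built into the definition of $\nabla_1$ on words, the prefactor $(-1)^{|\sigma_1|+\cdots+|\sigma_{r-1}|-r+1}$ being exactly the Koszul sign needed, and your well-definedness check on pairs $(\sigma|\sigma^{-1})$ (using $\nabla_1([a,b])=0$ and grouplikeness of edges under $\nabla_0$) is the correct one. Your Leibniz reduction for \eqref{nabla1chainhomotopy} is also sound: writing $F(x)=(d\otimes\text{id}+\text{id}\otimes d)\nabla_1(x)+\nabla_1(dx)$, the cross terms involving $\nabla_1(x)\cdot\bigl(\nabla_0(dy)-(d\otimes\text{id}+\text{id}\otimes d)\nabla_0(y)\bigr)$ and its mirror vanish because $\nabla_0$ and $\nabla_0^{\text{op}}$ are chain maps (Proposition \ref{nabla_0equations}), giving $F(xy)=F(x)\cdot\nabla_0(y)+\nabla_0^{\text{op}}(x)\cdot F(y)$; and $\nabla_0^{\text{op}}-\nabla_0$ satisfies the same rule by telescoping, using that $\nabla_0^{\text{op}}=\tau\circ\nabla_0$ is multiplicative. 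Since both sides also vanish on each $\text{id}_a$ and agree on edges, this legitimately reduces everything to a single simplex $[0,\ldots,k]$, $k\geq 2$. That is a genuine organizational improvement over the paper, which delegates the entire statement to the citation; what your reduction buys is a precise isolation of the one thing that must be checked by hand. For that remaining single-bead identity you, like the paper, defer to the cubical interpretation ($\nabla_0$ as Serre's diagonal, $\nabla_1$ as the cup-one-type interchange homotopy) from \cite{B}; this is acceptable given that the paper's formulas are drawn from there, though a fully self-contained argument would still require matching the sign $\epsilon(\bi,\bj)$, including the cross term $(m+l)(n+k)$, against the cubical one --- which is precisely the ``straightforward computation'' the paper also omits.
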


\subsection{Proof of Theorem \ref{Theorem2}} \label{proof1.2}
Throughout this section, we use Sweedler notation as described in \eqref{notationDelta} and \eqref{notationnabla}. 

\begin{definition}
We define the map $\chi \colon (C_\bullet(X), \partial) \to (\mathbb{L}_\bullet(X),D)$ as in \eqref{EQU:chi} by setting $\chi(\sigma)=\big(\text{id}_{\sigma})\sigma$ for $\sigma \in X_0$, and for $\sigma \in X_k$ with $k>0$, we define
\begin{multline*}
\chi(\sigma)= (-1)^{|(\sigma)^{0,1}|} \ \big( S(( \sigma)^{0,1}) \big)   \sigma^{0,2} 
+\big( (\sigma)^{{1,1}} | S( (\sigma ) ^{1,2}) \big)  \mathsf{s}(\sigma)
\\
+ (-1)^{ |\sigma'||\sigma''|+|(\sigma')^{0,1}|} \big(( \sigma'')^{1,1} | S(( \sigma'')^{1,2}) | S(( \sigma' )^{0,1})\big)  \sigma'^{\text{  }0,2}
\end{multline*}
\end{definition}
We will prove that $\chi$ is a chain map. In order to organize this computation, we will write $\chi$ as the composition of three maps 
\begin{equation}
C_\bullet(X) \xrightarrow{\iota} \mathbb{L}^{\text{ad}}_\bullet(X) \xrightarrow{T} \mathbb{L}^{\text{ad-op}}_\bullet(X) \xrightarrow{\psi} \mathbb{L}_\bullet(X)
\end{equation}
and check each of these is a chain map in three propositions below.

\begin{definition}
We start by describing the two new chain complexes  $\mathbb{L}^{\text{ad}}_\bullet(X)$ and $\mathbb{L}^{\text{ad-op}}_\bullet(X)$. Both of these complexes are generated by those elements $\sigma_0 \otimes \big(\sigma_1 | \cdots | \sigma_p\big) \in  C_\bullet(X) \otimes \mathbb{P}_\bullet(X)$
such that $\mathsf{t}(\sigma_0)=\mathsf{s}(\sigma_1)= \mathsf{t}(\sigma_p)$. Note these generators are different than the generators of $\mathbb{L}_{\bullet}(X)$, i.e., these are not necklaces in $X$ as described in Section \ref{introduction}). The differential \[ D^{\text{ad}} \colon \mathbb{L}^{\text{ad}}_\bullet(X) \to \mathbb{L}^{\text{ad}}_{\bullet-1}(X)\]
is given by $D= \widetilde{\partial} \otimes \text{id} + \text{id} \otimes d + \delta_R + \delta_L$, where
\begin{eqnarray} 
\delta_R(\sigma \otimes \alpha) =  (-1)^{|\sigma'| + |(\sigma'')^{0,2}||\alpha|} \sigma' \otimes  \big((\sigma'')^{0,1}|\alpha | S((\sigma'')^{0,2})\big) 
\end{eqnarray}
and
\begin{eqnarray} \delta_L(\sigma \otimes \alpha)= \varepsilon \big(\sigma'\big) \sigma'' \otimes \alpha = \partial_0\sigma \otimes \alpha,
\end{eqnarray}
where $\partial_0\sigma$ denotes the $0$-face of $\sigma$. The differential 
\[ D^{\text{ad-op}} \colon \mathbb{L}^{\text{ad-op}}_\bullet(X) \to \mathbb{L}^{\text{ad-op}}_{\bullet-1}(X)
\]
is given by $D^{\text{ad-op}}= \widetilde{\partial} \otimes \text{id} + \text{id} \otimes d + \delta_R^{\text{op}} + \delta_L$
where we now use the opposite coproduct $\nabla_0^{\text{op}}$ instead of $\nabla_0$ as follows (recall the notation $\nabla_0(\sigma)=(\sigma)^{0,1}\otimes (\sigma)^{0,2}$ from \eqref{notationnabla}):
\begin{equation*} \delta_R^{\text{op}}(\sigma \otimes \alpha)= 
(-1)^{|\sigma'| + |(\sigma'')^{0,2}||\alpha|+ |(\sigma'')^{0,2}||(\sigma'')^{0,1}|} \sigma' \otimes  \big((\sigma'')^{0,2}|\alpha | S((\sigma'')^{0,1})\big)
\end{equation*}
A routine calculation yields that both $D^{\text{ad}} \circ D^{\text{ad}}=0$ and $D^{\text{ad-op}} \circ D^{\text{ad-op}}=0$. Note that $\delta_R$ and $\delta_R^{\text{op}}$ use the adjoint action of $\mathbb{P}_\bullet(X)$ on itself using $\nabla_0$ and $\nabla_0^{\text{op}}$, respectively. 
\end{definition}

\begin{definition}
With the above definition, we now define \[\iota \colon C_{\bullet}(X) \to \mathbb{L}_\bullet^{\text{ad}}(X)\] by
\[ \iota(\sigma)= \sigma \otimes \text{id}_{\mathsf{t}(\sigma)}.\]
\end{definition}
\begin{proposition}\label{propiota} The map $\iota \colon C_{\bullet}(X) \to \mathbb{L}_\bullet^{\text{ad}}(X)$ is a chain map. 
\end{proposition}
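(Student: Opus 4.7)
The plan is to verify $D^{\text{ad}}\circ\iota = \iota\circ\partial$ by direct expansion of both sides. Expanding $\partial\sigma = \sum_{j=0}^{|\sigma|}(-1)^j \partial_j\sigma$ gives
\[ \iota(\partial\sigma) \;=\; \sum_{j=0}^{|\sigma|} (-1)^j\, \partial_j\sigma \otimes \text{id}_{\mathsf{t}(\partial_j\sigma)}, \]
where $\mathsf{t}(\partial_j\sigma) = \mathsf{t}(\sigma)$ for all $j < |\sigma|$; only the top face $j=|\sigma|$ actually shifts the basepoint of the identity factor. On the other hand, the four summands of $D^{\text{ad}}$ give
\[ D^{\text{ad}}(\sigma \otimes \text{id}_{\mathsf{t}(\sigma)}) \;=\; \widetilde{\partial}\sigma \otimes \text{id}_{\mathsf{t}(\sigma)} + \sigma \otimes d(\text{id}_{\mathsf{t}(\sigma)}) + \delta_R(\sigma \otimes \text{id}_{\mathsf{t}(\sigma)}) + \delta_L(\sigma \otimes \text{id}_{\mathsf{t}(\sigma)}), \]
of which the second summand vanishes, the first immediately yields the interior faces $1 \leq j \leq |\sigma|-1$, and $\delta_L$ yields the zeroth face $\partial_0\sigma \otimes \text{id}_{\mathsf{t}(\sigma)}$ by definition.

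The crux is then to identify $\delta_R(\sigma\otimes\text{id}_{\mathsf{t}(\sigma)})$ with the remaining top face term $(-1)^{|\sigma|}\partial_{|\sigma|}\sigma \otimes \text{id}_{\mathsf{t}(\partial_{|\sigma|}\sigma)}$. This is where the antipode equations do the essential work. Writing $\Delta\sigma = \sum \sigma' \otimes \sigma''$ via Alexander--Whitney, each summand produces a term of the form
\[ \pm\, \sigma' \otimes \big((\sigma'')^{0,1} \,\big|\, \text{id}_{\mathsf{t}(\sigma)} \,\big|\, S((\sigma'')^{0,2})\big). \]
Because $\text{id}_{\mathsf{t}(\sigma)}$ is the unit of the monoid $\mathbb{P}_\bullet(X)(\mathsf{t}(\sigma),\mathsf{t}(\sigma))$, this three-bead sequence collapses to $\mu\circ(\text{id}\otimes S)\circ \nabla_0(\sigma'')$, which by Proposition~\ref{antipodeequations} equals $\varepsilon(\sigma'')\,\text{id}_{\mathsf{s}(\sigma'')}$. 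Since $\varepsilon$ is supported on $X_1 \cup X_1^{-1}$ together with identities, the only Alexander--Whitney summand producing a genuine single-face contribution is the one with $\sigma'' = [i_{|\sigma|-1}, i_{|\sigma|}]$ and $\sigma' = \partial_{|\sigma|}\sigma$, and the reduction $([a,b]\,|\,[b,a]) \sim \text{id}_a$ installs the identity at the correct new basepoint $i_{|\sigma|-1} = \mathsf{t}(\partial_{|\sigma|}\sigma)$.

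The main obstacle is the sign and reduction bookkeeping. One must check that the prefactor $(-1)^{|\sigma'|+|(\sigma'')^{0,2}||\alpha|}$ in $\delta_R$ agrees with $(-1)^{|\sigma|}$ after substituting $|\sigma'|=|\sigma|-1$ and $|\alpha|=0$, and one must verify that any extremal Alexander--Whitney summands in which $\sigma'$ or $\sigma''$ is a $0$-simplex (hence interpreted as an identity element of $\mathbb{P}_\bullet(X)$) are either absorbed into the $\delta_L$ contribution or cancel among themselves, so that no spurious terms survive and the identification of $D^{\text{ad}}\iota(\sigma)$ with $\iota(\partial\sigma)$ is exact.
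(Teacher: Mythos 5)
Your argument is essentially identical to the paper's proof: the paper likewise notes that $d(\text{id}_{\mathsf{t}(\sigma)})=0$, that $\widetilde{\partial}$ supplies the interior faces, that $\delta_L$ supplies $\partial_0\sigma$, and that $\delta_R(\sigma\otimes\text{id}_{\mathsf{t}(\sigma)})$ collapses via the antipode equation $\mu\circ(\text{id}\otimes S)\circ\nabla_0(\sigma'')=\varepsilon(\sigma'')\,\text{id}_{\mathsf{s}(\sigma'')}$ to $\pm\,\partial_{|\sigma|}\sigma\otimes\text{id}_{\mathsf{t}(\partial_{|\sigma|}\sigma)}$. The only caveat is the sign check you defer at the end: a literal substitution gives $(-1)^{|\sigma|-1}$ rather than $(-1)^{|\sigma|}$, but the paper itself writes this coefficient only as $\pm$, so your treatment matches the paper's level of rigor on that point.
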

\begin{proof}
First observe that, since the counit $\varepsilon$ of $\nabla_0$ is only non-zero on the $1$-simplices of $X$ (which are elements in $\mathbb{P}_0(X)$), we have $\varepsilon(\sigma')\sigma''= \partial_{0}\sigma$ and $\varepsilon(\sigma'') \sigma'= \partial_{|\sigma|}\sigma$, the first and last faces of $\sigma \in X_k$, respectively. We use this fact, together with the antipode equations from Proposition \ref{antipodeequations} to compute:
\begin{align*}
D^{\text{ad}}(\sigma \otimes \text{id}_{\mathsf{t}(\sigma)})&= \widetilde{\partial}\sigma \otimes \text{id}_{\mathsf{t}(\sigma)}  \pm \sigma' \otimes \big(( \sigma'')^{0,1}|S( (\sigma'')^{0,2})\big) + \varepsilon(\sigma') \sigma'' \otimes \text{id}_{\mathsf{t}(\sigma)} 
\\
& =\widetilde{\partial}\sigma \otimes \text{id}_{\mathsf{t}(\sigma)} \pm \varepsilon(\sigma'')\sigma' \otimes \text{id}_{\mathsf{s}(\sigma'')} + \varepsilon(\sigma') \sigma'' \otimes \text{id}_{\mathsf{t}(\sigma)} 
\\
&=\widetilde{\partial}\sigma \otimes \text{id}_{\mathsf{t}(\sigma)} \pm \partial_{|\sigma|}\sigma \otimes \text{id}_{\mathsf{t}(\partial_{|\sigma|}\sigma)} + \partial_0\sigma \otimes \text{id}_{\mathsf{t}(\partial_0\sigma)}
\\ &= \iota(\partial \sigma).
\end{align*}
\end{proof}
\begin{definition}
Next we define 
\[ T \colon \mathbb{L}_\bullet^{\text{ad}}(X) \to \mathbb{L}_\bullet^{\text{ad-op}}(X) \]
by 
\[ T= \text{id} + T_0,\]
where
\[
T_0(\sigma \otimes \alpha) = (-1)^{|(\sigma)^{1,2}||\alpha|} \sigma' \otimes \big( (\sigma'')^{1,1}|\alpha| S((\sigma'')^{1,2})\big).
\]
\end{definition}
\begin{proposition}\label{propT}
The map $T \colon \mathbb{L}_\bullet^{\text{ad}}(X) \to \mathbb{L}_\bullet^{\text{ad-op}}(X)$ is a chain map.
\end{proposition}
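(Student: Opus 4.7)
The goal is to verify $D^{\text{ad-op}} \circ T = T \circ D^{\text{ad}}$. Since $T = \text{id} + T_0$ and $D^{\text{ad-op}} - D^{\text{ad}} = \delta_R^{\text{op}} - \delta_R$, this collapses to the identity
\[
\delta_R^{\text{op}} - \delta_R + D^{\text{ad-op}} \circ T_0 - T_0 \circ D^{\text{ad}} = 0.
\]
The plan is to expand the second pair of terms according to the four summands of each differential and match each contribution against $\delta_R^{\text{op}} - \delta_R$ using the structural equations available for $\Delta$, $\nabla_0$, $\nabla_1$, $\mu$, and $S$.

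The heart of the argument is the \emph{internal differential commutator}, $(\widetilde{\partial} \otimes \text{id} + \text{id} \otimes d) \circ T_0 - T_0 \circ (\widetilde{\partial} \otimes \text{id} + \text{id} \otimes d)$. I would evaluate it using three facts: $\Delta$ is a chain map for $\widetilde{\partial}$, $d$ is a derivation of $\mu$, and $S$ is a chain map (Proposition \ref{antipodeequations}). The $d$-derivation applied to the middle sequence $\big((\sigma'')^{1,1} | \alpha | S((\sigma'')^{1,2})\big)$ breaks into three groups: the action on $\alpha$ (which cancels $T_0 \circ (\text{id} \otimes d)$), and the actions on $(\sigma'')^{1,1}$ and on $S((\sigma'')^{1,2})$. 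Pulling $d$ through $S$ and invoking $\Delta \circ \widetilde{\partial} = (\widetilde{\partial} \otimes \text{id} + \text{id} \otimes \widetilde{\partial}) \circ \Delta$, the latter two groups combine with the $T_0 \circ (\widetilde{\partial} \otimes \text{id})$ term to produce, up to sign, the expression $(d \otimes \text{id} + \text{id} \otimes d)\nabla_1(\sigma'') + \nabla_1(\widetilde{\partial}\sigma'')$ inserted into the adjoint-action slot. By the chain homotopy equation of Proposition \ref{nabla_1equations}, this equals $\nabla_0^{\text{op}}(\sigma'') - \nabla_0(\sigma'')$, which is precisely what packages into $\delta_R - \delta_R^{\text{op}}$, matching the remaining term.

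The remaining task is to check that the $\delta$-piece $(\delta_R^{\text{op}} + \delta_L) \circ T_0 - T_0 \circ (\delta_R + \delta_L)$ vanishes on its own. The $\delta_L$ contribution reduces to comparing $\partial_0\sigma$ computed before or after $T_0$, and cancels using that the counit $\varepsilon$ vanishes on simplices of dimension greater than one together with the compatibility of $\nabla_1$ with $\varepsilon$. The $\delta_R$ contribution, which nests a second adjoint-action sequence inside the output of $T_0$, requires the monoid compatibility $\nabla_1 \circ \mu = \mu_{\otimes} \circ (\nabla_0^{\text{op}} \otimes \nabla_1 + \nabla_1 \otimes \nabla_0)$ from Proposition \ref{nabla_1equations} together with the antipode relation $S \circ \mu = \mu \circ (S \otimes S) \circ \tau$ from Proposition \ref{antipodeequations} to reconcile the two different orderings in which the auxiliary sequences are inserted.

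The main obstacle will be sign bookkeeping: $T_0$ already carries a graded-braiding sign, and every commutator piece produces several summands with signs depending on the degrees of $\sigma'$, $\sigma''$, $(\sigma'')^{1,1}$, $(\sigma'')^{1,2}$, and $\alpha$. A systematic diagrammatic shorthand (fixing a strict convention for the positions of tensor factors) would keep the calculation manageable. Structurally, however, the verification is forced by Proposition \ref{nabla_1equations}: $T_0$ is essentially $\nabla_1$ fed into the adjoint-action slot, and the whole proposition is the operadic reflection of the statement that $\nabla_1$ is a chain homotopy between $\nabla_0$ and $\nabla_0^{\text{op}}$, so no new structural ingredient beyond those already recorded is required.
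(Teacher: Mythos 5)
Your overall strategy is the same as the paper's: expand $T=\mathrm{id}+T_0$ and cancel everything using the chain homotopy equation \eqref{nabla1chainhomotopy}, the derivation compatibility \eqref{nabla_1derivation}, and the antipode relations. But the specific decomposition you propose into two independently vanishing blocks does not hold, and neither of your two sub-claims is true on its own. When you feed the internal-differential commutator into \eqref{nabla1chainhomotopy}, the term $\nabla_1\circ d$ on the left-hand side involves the \emph{full} differential of $\mathbb{P}_\bullet(X)$, namely $\underline{\widetilde{\partial}}+\underline{\widetilde{\Delta}}$; your bookkeeping only produces $\nabla_1(\widetilde{\partial}\sigma'')$, so what the internal commutator actually yields is $\nabla_0^{\mathrm{op}}(\sigma'')-\nabla_0(\sigma'')$ \emph{minus} a leftover term $\nabla_1(\underline{\widetilde{\Delta}}\sigma'')$ inserted into the adjoint slot --- this is the paper's term \eqref{term6}, and it does not appear anywhere in your plan.

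Correspondingly, the block $(\delta_R^{\mathrm{op}}+\delta_L)\circ T_0-T_0\circ(\delta_R+\delta_L)$ does not vanish by itself. Writing both sides out, $\delta_R^{\mathrm{op}}\circ T_0$ places the $\nabla_0$-factors outside the $\nabla_1$-factors, while $T_0\circ\delta_R$ nests them the other way around, and no identity among $\Delta$, $\nabla_0$, $\nabla_1$, $S$ relates these two expressions directly: the compatibility \eqref{nabla_1derivation} that you invoke can only enter through $\nabla_1$ applied to a two-bead sequence, i.e.\ precisely through the $\nabla_1\circ\underline{\widetilde{\Delta}}$ term you dropped. In the paper this is the cancellation of \eqref{term6}$+$\eqref{term11} against the $T_0$-component of \eqref{Tterm4}, using \eqref{nabla_1derivation} together with $S\circ\mu=\mu\circ(S\otimes S)\circ\tau$. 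You have listed all the right ingredients, but the two halves of your computation cannot be closed separately as claimed; the cross-term forces them to be combined before anything cancels.
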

\begin{proof}
We use Propositions \ref{nabla_0equations}, \ref{antipodeequations}, \ref{nabla_1equations}, and the coassociativity of $\Delta$ in the following computation. Write $\sigma'\otimes \sigma'' \otimes \sigma'''$ for $(\Delta \otimes \text{id}) \circ \Delta (\sigma)= (\text{id} \otimes \Delta)  \circ \Delta (\sigma)$. First, we compute
\begin{align}
D^{\text{ad-op}}(T(\sigma \otimes \alpha))=&
 \label{term1}
 \widetilde{\partial}\sigma \otimes \alpha 
\\ & \label{term2}
\pm \sigma \otimes d \alpha 
\\ & \label{term3}
\pm \sigma' \otimes \big( (\sigma'')^{0,2}|\alpha|S((\sigma'')^{0,1})\big)
\\ & \label{term4}
+ \varepsilon(\sigma')\sigma'' \otimes \alpha
\\ & \label{term5}
\pm (\widetilde{\partial}\sigma)' \otimes \big( (\widetilde{\partial}\sigma'')^{1,1}|\alpha|S((\widetilde{\partial}\sigma'')^{1,2})\big)
\\ & \label{term6}
\pm \sigma' \otimes \big( (\sigma''|\sigma''')^{1,1}|\alpha|S( (\sigma''|\sigma''')^{1,2})\big)
\\ & \label{term7}
\pm \sigma' \otimes \big( (\sigma'')^{1,1}|d\alpha| S((\sigma'')^{1,2})\big)
\\ & \label{term8}
\pm \varepsilon(\sigma')\sigma'' \otimes \big((\sigma''')^{1,1}|\alpha|S((\sigma''')^{1,2})\big)
\\ & \label{term9}
\pm \sigma' \otimes \big( (\sigma'')^{0,2}|\alpha|S((\sigma'')^{0,1})\big)
\\ & \label{term10}
\pm \sigma' \otimes \big( (\sigma'')^{0,1}|\alpha|S((\sigma'')^{0,2})\big)
\\ & \label{term11}
\pm \sigma' \otimes \big( (\sigma'')^{0,2}|(\sigma''')^{1,1}|\alpha|S( (\sigma''')^{1,2})|S((\sigma'')^{0,1})\big).
\end{align}
In the above computation \eqref{term1}, \eqref{term2}, \eqref{term3} and \eqref{term4} come from applying $D^{\text{ad-op}}$ to the identity component of $T(\sigma \otimes \alpha)$. The terms \eqref{term5}, \eqref{term6}, \eqref{term7}, \eqref{term9}, and \eqref{term10} come from applying $d$ to $\big((\sigma'')^{1,1}|\alpha|S((\sigma '')^{1,2})\big)$ and using the chain homotopy equation \eqref{nabla1chainhomotopy}. Terms \eqref{term8} and \eqref{term11} come from applying $\delta_L$ and $\delta_R^{\text{op}}$ to $T_0(\sigma \otimes \alpha)$, respectively. Note that \eqref{term3} cancels with \eqref{term9}.

We now explain how all the remaining terms above cancel with those below:
\begin{align}
    TD^{\text{ad}}(\sigma \otimes \alpha)= &
     \label{Tterm1}
    T(\widetilde{\partial}\sigma \otimes \alpha)
    \\ & \label{Tterm2}
    \pm T(\sigma \otimes d \alpha)
    \\ & \label{Tterm3}
    + T(\varepsilon(\sigma')\sigma'' \otimes \alpha) 
    \\ & \label{Tterm4}
    \pm T(\sigma' \otimes  \big((\sigma'')^{0,1}|\alpha | S((\sigma'')^{0,2})\big)).
\end{align}
Term \eqref{Tterm1} cancels with \eqref{term1} + \eqref{term5}, \eqref{Tterm2} with \eqref{term2} + \eqref{term7}, and \eqref{Tterm3} with \eqref{term4} + \eqref{term8}.  Finally, \eqref{Tterm4} cancels with \eqref{term10} + \eqref{term6} + \eqref{term11}.
In fact, \eqref{term10} is the identity component in  \eqref{Tterm4} and the fact that \eqref{term6} + \eqref{term11} corresponds to the $T_0$ component of \eqref{Tterm4} follows from the compatibility equation \eqref{nabla_1derivation}. 
\end{proof}
\begin{definition}
Finally, we define 
\[ \psi \colon \mathbb{L}_\bullet^{\text{ad-op}}(X) \to \mathbb{L}_\bullet(X)
\]
by $\psi(\sigma \otimes \alpha)= \big(\alpha\big)\sigma$ if $\sigma \in X_0$ and
\[ \psi(\sigma\otimes \alpha)= (-1)^{|\alpha|(|\sigma|-1+|(\sigma)^{0,1}|)}\big(\alpha|S((\sigma)^{0,1})\big)\sigma^{0,2}\]
for any $\sigma \in X_k$, $k>0$. 
\end{definition}
\begin{proposition}\label{proppsi}
The map  $\psi \colon \mathbb{L}_\bullet^{\text{ad-op}}(X) \to \mathbb{L}_\bullet(X)$ is a chain map. 
\end{proposition}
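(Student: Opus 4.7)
The plan is to prove $\psi$ is a chain map by direct expansion of both $D\circ \psi$ and $\psi\circ D^{\text{ad-op}}$ on a generator $\sigma\otimes\alpha\in \mathbb{L}^{\text{ad-op}}_\bullet(X)$, and then to pair terms off against one another, in the same spirit as the proof of Proposition \ref{propT}. The nontrivial case is $\sigma\in X_k$ with $k>0$; the $k=0$ case is immediate from $\psi(\sigma\otimes\alpha)=(\alpha)\sigma$ and the definitions. In the generic case, $\psi(\sigma\otimes\alpha)=\pm(\alpha\mid S((\sigma)^{0,1}))\sigma^{0,2}$, so applying $D=d\,\square\,\mathrm{id}+\mathrm{id}\,\square\,\widetilde{\partial}+\delta$ produces four families of terms: (i) $d$ hitting $\alpha$, (ii) $d$ hitting $S((\sigma)^{0,1})$, (iii) $\widetilde{\partial}$ hitting the marked bead $\sigma^{0,2}$, and (iv) $\delta$ rewriting the marked bead via $\Delta(\sigma^{0,2})$ and concatenating into the sequence of beads.

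In step (ii) I will use that $S$ is a chain map (Proposition \ref{antipodeequations}) to move $d$ past $S$, and that $\nabla_0$ is a chain map with respect to $d$ (Proposition \ref{nabla_0equations}) to replace $d(\sigma)^{0,1}\otimes(\sigma)^{0,2}$ by $\nabla_0(d\,\sigma)-(-1)^{|(\sigma)^{0,1}|}(\sigma)^{0,1}\otimes d\,(\sigma)^{0,2}$, noting that $d\,(\sigma)$ in $\mathbb{P}_\bullet(X)$ is $(\widetilde{\partial}\sigma)+\underline{\widetilde{\Delta}}(\sigma)$. In this way the contribution of (ii) splits into a piece built from $\widetilde{\partial}\sigma$, which combines with $\psi(\widetilde{\partial}\sigma\otimes\alpha)$ coming from the $\widetilde{\partial}\otimes\mathrm{id}$ component of $D^{\text{ad-op}}$, and a piece built from $\underline{\widetilde{\Delta}}\sigma$, which will pair with the $\delta_L$ and $\delta_R^{\text{op}}$ contributions on the other side. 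The term (i) matches $\psi(\sigma\otimes d\alpha)$ on the nose, since $\psi$ preserves $\alpha$ up to sign. The $\delta_L$ summand gives $\psi(\partial_0\sigma\otimes\alpha)$, which, using $\varepsilon(\sigma')\sigma''=\partial_0\sigma$ and the antipode equations, will account for the ``$\bi_1=0$'' boundary contribution from (iii) and (iv).

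The main obstacle, and the only part that requires an identity beyond what is already at hand, is pairing up the $\delta_R^{\text{op}}$ contribution with the combined effect of (iii), (iv), and the remaining $\underline{\widetilde{\Delta}}\sigma$ piece from (ii). Concretely, one must verify the identity
\[
\Delta(\sigma^{0,2})\;=\;\text{terms assembled from }\nabla_0(\sigma'),\ \nabla_0(\sigma''),\ \text{and }\underline{\widetilde{\Delta}}(\sigma),
\]
expressing the compatibility between the Alexander--Whitney coproduct $\Delta$ on $C_\bullet(X)$ and the cubical--style coproduct $\nabla_0$ on $\mathbb{P}_\bullet(X)$. This is most cleanly checked directly from the explicit index sum defining $\nabla_0$ in Section \ref{SEC:nabla_0}: a splitting of $\sigma^{0,2}=[0,\bi_1,\ldots,\bi_l,k]$ at the $j$-th vertex corresponds exactly to the decomposition $\Delta(\sigma)=\sigma'\otimes\sigma''$ at the same vertex, combined with re-indexing the $\bi$'s above and below $j$ as two separate sums. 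Once this identity is in place, together with the bimonoid compatibility of $\nabla_0$ with $\mu$ and the antipode equations, each of the remaining terms on the two sides has a unique partner, and the signs (which I would track using the standard Koszul convention, $|\underline{c}|=|c|-1$) will agree by the same bookkeeping as in Proposition \ref{propT}. The bulk of the work is sign-checking rather than algebraic creativity; I would organize it by labelling the terms from $D\psi$ and $\psi D^{\text{ad-op}}$ in parallel lists and indicating the pairing explicitly.
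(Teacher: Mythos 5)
Your proposal follows essentially the same route as the paper's proof: expand $D\circ\psi$ and $\psi\circ D^{\text{ad-op}}$ on a generator, match the $d\alpha$, $\widetilde{\partial}\sigma$, $\delta_L$, and $\delta_R^{\text{op}}$ contributions term by term, and resolve the remaining terms using the antipode equations together with the compatibility of $\nabla_0$ with $\mu$ and with the coproduct. The ``key identity'' you isolate --- expressing $\Delta(\sigma^{0,2})$ via $\nabla_0(\sigma')$, $\nabla_0(\sigma'')$, and $\underline{\widetilde{\Delta}}(\sigma)$ --- is exactly what the paper invokes as Proposition \ref{nabla_0equations} (in particular that $\nabla_0$ commutes with $\widetilde{\Delta}$), so no new ingredient is needed beyond what you describe.
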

\begin{proof}
We have
\begin{align} \nonumber
D\psi(\sigma \otimes \alpha)=& \pm D(\big(\alpha|S((\sigma)^{0,1})\big)\sigma^{0,2})
\\ =& \label{Dpsi1}
\pm \big(d\alpha|S((\sigma)^{0,1})\big)\sigma^{0,2}
\\ &\label{Dpsi2}
\pm \big(\alpha|\widetilde{\partial} S((\sigma)^{0,1})\big)\sigma^{0,2}
\\ &\label{Dpsi3}
\pm \big(\alpha| \underline{\widetilde{\Delta}}S((\sigma^{0,1}))\big) \sigma^{0,2}
\\ &\label{Dpsi4}
\pm \big(\alpha|S((\sigma)^{0,1})\big)\widetilde{\partial}\sigma^{0,2}
\\ &\label{Dpsi5}
\pm \big(\alpha|S((\sigma)^{0,1})|\sigma^{0,2 '} \big)\sigma^{0,2 ''}
\\ &\label{Dpsi6}
\pm  \big(\sigma^{0,2 ''}|\alpha|S((\sigma)^{0,1}) \big)\sigma^{0,2 '}. 
\end{align}
Note that terms \eqref{Dpsi2} and \eqref{Dpsi3} above correspond to $\pm \big(\alpha|dS((\sigma)^{0,1})\big)\sigma^{0,2}$ using $d=\widetilde{\partial} + \widetilde{\Delta}$.

We explain how all the terms above cancel with the ones below:
\begin{align}
\psi D^{\text{ad-op}}(\sigma \otimes \alpha) =&
 \label{psiD1}
\psi(\widetilde{\partial}\sigma \otimes \alpha)
\\ & \label{psiD2}
\pm \psi (\sigma \otimes d \alpha)
\\ & \label{psiD3}
+ \varepsilon(\sigma')\psi(\sigma'' \otimes \alpha)
\\ & \label{psiD4}
\pm \psi(\sigma' \otimes \big( \sigma ''^{0,2}|\alpha|S((\sigma '')^{0,1})\big)).
\end{align}
Term \eqref{psiD1} cancels with \eqref{Dpsi2} + \eqref{Dpsi4} and term \eqref{psiD2} cancels with \eqref{Dpsi1}. We may write \eqref{psiD3} as
\begin{align} \label{rewrite1}
\varepsilon(\sigma')\psi(\sigma'' \otimes \alpha)= \big(\alpha|S( (\sigma'|\sigma'')^{0,1})| \sigma{'} ^{0,2}\big)\sigma''^{0,2}
\end{align}
and \eqref{psiD4} as
\begin{align} \label{rewrite2}
    \psi(\sigma' \otimes \big( \sigma^{'' 0,2}|\alpha|S((\sigma '')^{0,1})\big))= \big( \sigma ''^{0,2}|\alpha|S((\sigma'|\sigma'')^{0,1})\big) \sigma{'}^{0,2}.
\end{align}
Finally, we see that \eqref{rewrite1} + \eqref{rewrite2}
cancels with \eqref{Dpsi3} + \eqref{Dpsi5} + \eqref{Dpsi6} using Proposition \ref{nabla_0equations} (in particular, that $\nabla_0$ commutes with $\widetilde{\Delta}$). 
\end{proof}

With the above work, we are now able to prove Theorems \ref{Theorem2} and \ref{Theorem-chain-homotopic}.
\begin{proof}[Proof of Theorems \ref{Theorem2} and \ref{Theorem-chain-homotopic}] An easy computation shows that $\chi = \psi \circ T \circ\iota$. Then Propositions \ref{propiota}, \ref{propT}, \ref{proppsi} imply that $\chi$ is a chain map. 

Unraveling the formula for $\chi$, we observe that for any $\sigma \in X_k$ with $k=0,1$, we have $\rho(\sigma)=\chi(\sigma)$, so $H_0(\rho)=H_0(\chi)$ and $H_1(\rho)=H_1(\chi)$. A similar acyclic models argument as the one used to prove Theorem \ref{Theorem1} yields that $\chi$ and $\rho$ are naturally chain homotopic. 
\end{proof}

\section*{Acknowledgments} 
MR would like to thank Alex Takeda and Florian Naef for fruitful discussions. MR acknowledges support by NSF Grant DMS 2405405.

\end{document}